\newtheorem{thm}{Theorem}[section]
\newtheorem{cor}[thm]{Corollary}
\newtheorem{lem}[thm]{Lemma}
\newtheorem{prop}[thm]{Proposition}
\theoremstyle{definition}
\numberwithin{equation}{section}
\newcommand{\CQFD}
{%
\mbox{}%
\nolinebreak%
\hfill%
$\square$%
\medbreak%
\smallskip%
}
\def\ind{{\rm 1\kern-.27em I}}
\newcommand{\dis}{\displaystyle}
\newcommand{\eps}{\varepsilon}
\newcommand{\ud}{\mathrm{d}}
\newcommand{\D}{\mathbb{D}}
\newcommand{\C}{\mathbb{C}}
\newcommand{\N}{\mathbb{N}}
\newcommand{\T}{\mathbb{T}}
\begin{document}


\baselineskip=17pt


\title{Essential norms of weighted composition operators between Hardy spaces $H^p$ and $H^q$ for $1\leq p,q\leq \infty$}

\author{R. Demazeux\\
Univ Lille Nord de France, FRANCE\\
UArtois, Laboratoire de Math\'ematiques de Lens EA 2462,\\
F\'ed\'eration CNRS Nord-Pas-de-Calais FR 2956,\\
F-62 300 LENS, FRANCE\\
E-mail: romain.demazeux@euler.univ-artois.fr}

\date{}

\maketitle


\renewcommand{\thefootnote}{}

\footnote{2010 \emph{Mathematics Subject Classification}: Primary 47B33; Secondary 30H10, 46E15.}

\footnote{\emph{Key words and phrases}: weighted composition operator, essential norm, Carleson measure, Hardy space.}

\renewcommand{\thefootnote}{\arabic{footnote}}
\setcounter{footnote}{0}


\begin{abstract}
We complete the different cases remaining in the estimation of the essential norm of a weighted composition operator acting  between the Hardy spaces $H^p$ and $H^q$ for~$1\leq p,q\leq\infty.$ In particular we give some estimates for the cases~$1=p\leq q\leq\infty$ and~$1\leq q<p\leq\infty.$
\end{abstract}

\section{Introduction}
Let $\D=\{z\in\C\mid\ |z|<1\}$ denote the open unit disk in the complex plane. Given two analytic functions~$u$ and~$\varphi$ defined on~$\D$ such that~$\varphi(\D)\subset\D$, one can define the \emph{weighted composition operator} $uC_\varphi$ that maps any analytic function~$f$ defined on~$\D$ into the function $uC_\varphi(f)=u(f\circ\varphi).$ In~\cite{H}, de Leeuw showed that the isometries in the Hardy space~$H^1$ are weighted composition operators, while Forelli~\cite{F} obtained this result for the Hardy space~$H^p$ when $1<p<\infty,\ p\neq 2$. Another example is the study of composition operators on the half-plane. A composition operator in a Hardy space of the half-plane is bounded if and only if a certain weighted composition operator is bounded on the Hardy space of the unit disk (see~\cite{M} and~\cite{SS}).\par

When $u\equiv 1,$ we just have the composition operator~$C_\varphi$. The continuity of these operators on the Hardy space $H^p$ is ensured by the Littlewood's subordination principle, which says that~$C_\varphi(f)$ belongs to $H^p$ whenever $f\in H^p$ ~(see \cite{CMC}, Corollary $2.24$). As a consequence, the condition \mbox{$u\in H^\infty$} suffices for the boundedness of~$uC_\varphi$ on~$H^p.$ Considering the image of the constant functions, a necessary condition is that~$u$ belongs to~$H^p.$ Nevertheless a weighted composition operator needs not to be continuous  on $H^p$, and it is easy to find examples where~$uC_\varphi(H^p)\nsubseteq H^p$~(see Lemma~$2.1$ of~\cite{CH} for instance). \par
In this note we deal with weighted composition operators between~$H^p$ and~$H^q$ for~\mbox{$1\leq p,q\leq\infty.$} Boundedness and compactness are characterized in~\cite{CH} for~$1\leq p\leq q<\infty$ by means of Carleson measures, while essential norms of weighted composition operators are estimated in~\cite{CZ1} for~$1<p\leq q<\infty$ by means of an integral operator. For the case~$1\leq q<p<\infty,$ boundedness and compactness of~$uC_\varphi$ are studied in~\cite{CZ1}, and Gorkin and MacCluer in~\cite{GM} gave an estimate of the essential norm of a composition operator acting between~$H^p$ and $H^q.$\par
The aim of this paper is to complete the different cases remaining in the estimation of the essential norm of a weighted composition operator. In section~$2$ and~$3,$ we give an estimate of the essential norm of~$uC_\varphi$ acting between $H^p$ and~$H^q$ when~$p=1$ and~$1\leq q<\infty$ and when~$1\leq p<\infty$ and~$q=\infty.$ Sections~$4$ and~$5$ are devoted to the case where~$\infty\geq p>q\geq 1.$\par
\vskip 0.5cm
Let $\overline{\D}$ be the closure of the unit disk~$\D$ and $\T=\partial\D$ its boundary. We denote by $\ud m=\ud t/2\pi$ the normalised Haar measure on~$\T.$ If $A$ is a Borel subset of~$\T$, the notation~$m(A)$ as well as~$|A|$ will design the Haar measure of~$A.$ For~$1\leq p<\infty$, the Hardy space $H^p(\D)$ is the space of analytic functions~$f:\D\to\C$ satisfying the following condition\[\|f\|_p=\sup_{0<r<1}\left(\int_\T|f(r\zeta)|^p\ \ud m(\zeta)\right)^{1/p}<\infty.\] Endowed with this norm, $H^p(\D)$ is a Banach space. The space~$H^\infty(\D)$ is consisting of every bounded analytic function on~$\D,$ and its norm is given by the supremum norm on~$\D$.\\
We recall that any function~$f\in H^p(\D)$ can be extended on~$\T$ to a function~$f^*$ by the following formula:~$f^*(e^{i\theta})=\lim_{r\nearrow 1}f(re^{i\theta})$. The limit exists almost everywhere by Fatou's theorem, and~$f^*\in L^p(\T)$. Moreover, $f\mapsto f^*$ is an into isometry from~$H^p(\D)$ to~$L^p(\T)$ whose image, denoted by~$H^p(\T)$ is the closure (weak-star closure for~$p=\infty$) of the set of polynomials in~$L^p(\T).$ So we can identify~$H^p(\D)$ and~$H^p(\T)$, and we will use the notation~$H^p$ for both of these spaces. More on Hardy spaces can be found in~\cite{K} for instance.\par

The \emph{essential norm} of an operator~$T:X\rightarrow Y,$ denoted~$\|T\|_e$, is given by\[\|T\|_e=\inf\{\|T-K\|\mid\ K\ \textrm{is a compact operator from }X\textrm{ to }Y\}.\]
Observe that $\|T\|_e\leq\|T\|,$ and $\|T\|_e$ is the norm of~$T$ seen as an element of the space~$B(X,Y)/K(X,Y)$ where $B(X,Y)$ is the space of all bounded operators from~$X$ to~$Y$ and $K(X,Y)$ is the subspace consisting of all compact operators.

Notation: we will write $a\approx b$ whenever there exists two positive universal constants~$c$ and~$C$ such that~$cb\leq a\leq Cb.$ In the sequel,~$u$ will be a \emph{non-zero analytic function} on~$\D$ and~$\varphi$ will be a \emph{non-constant analytic function} defined on~$\D$ satisfying~$\varphi(\D)\subset\D.$\\


\section{$uC_\varphi\in B(H^1,H^q)$ for $1\leq q<\infty$}

 Let us first start with a characterization of the boundedness of $uC_\varphi$ acting between~$H^p$ and $H^q$:

\begin{thm}[see {\cite[Theorem~$4$]{CZ1}}]\label{Hp-Hq}
Let $u$ be an analytic function on~$\D$ and $\varphi$ an analytic self-map of~$\D.$ Let~$0<p\leq q<\infty.$ Then the weighted composition operator $uC_\varphi$ is bounded from $H^p$ to~$H^q$ if and only if \[\sup_{a\in\D}\int_{\T}\vert u(\zeta)\vert^q\bigg(\frac{1-\vert a\vert^2}{\vert1-\bar{a}\varphi(\zeta)\vert^2}\bigg)^{q/p}\ \ud m(\zeta)<\infty.\]
\end{thm}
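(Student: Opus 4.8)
The plan is to recast the boundedness of $uC_\varphi:H^p\to H^q$ as a Carleson embedding. Associate to $u$ and $\varphi$ the positive Borel measure $\mu$ on $\overline{\D}$ obtained by pushing $|u|^q\,\ud m$ forward under the boundary map $\varphi^*$, namely $\mu(E)=\int_{(\varphi^*)^{-1}(E)}|u(\zeta)|^q\,\ud m(\zeta)$ for Borel $E\subseteq\overline{\D}$. A standard argument (using that the boundary function of $u\,(f\circ\varphi)$ equals $u^*\,(f^*\circ\varphi^*)$ a.e.\ on $\T$, together with a subharmonicity estimate on the $H^q$ means) identifies
\[\|uC_\varphi f\|_q^q=\int_{\T}|u(\zeta)|^q\,|f^*(\varphi^*(\zeta))|^q\,\ud m(\zeta)=\int_{\overline{\D}}|f|^q\,\ud\mu ,\qquad f\in H^p .\]
Hence $uC_\varphi$ is bounded from $H^p$ to $H^q$ if and only if the inclusion $H^p\hookrightarrow L^q(\mu)$ is bounded, and the two operator norms coincide.

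For necessity I would test this on the normalized functions $f_a(z)=\bigl((1-|a|^2)/(1-\bar a z)^2\bigr)^{1/p}$, $a\in\D$, which satisfy $\|f_a\|_p^p=\int_\T\frac{1-|a|^2}{|1-\bar a\zeta|^2}\,\ud m(\zeta)=1$ because the integrand is the Poisson kernel at $a$. Substituting $f_a$ into the identity above and unwinding the push-forward gives $\int_\T|u(\zeta)|^q\bigl(\frac{1-|a|^2}{|1-\bar a\varphi^*(\zeta)|^2}\bigr)^{q/p}\,\ud m(\zeta)\le\|uC_\varphi\|^q$, so the supremum in the statement is finite.

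For sufficiency I would use the classical description of Carleson measures. A short geometric computation shows that the supremum condition is equivalent to the Carleson-box condition $\mu(S(I))\le C\,|I|^{q/p}$ for all subarcs $I\subseteq\T$, where $S(I)=\{z\in\overline{\D}:\ 1-|I|\le|z|\le1,\ z/|z|\in I\}$: taking $a=(1-|I|)\,\xi_I$ with $\xi_I$ the midpoint of $I$ gives $\frac{1-|a|^2}{|1-\bar a z|^2}\approx|I|^{-1}$ on $S(I)$, hence one inequality; conversely, decomposing $\overline{\D}$ into the windows $S(2^kI)\setminus S(2^{k-1}I)$, on which $\frac{1-|a|^2}{|1-\bar a z|^2}\lesssim 2^{-2k}|I|^{-1}$, and summing a geometric series gives the other. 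The generalized Carleson embedding theorem of Duren and Luecking --- which holds precisely because $p\le q$ --- then shows that $\mu(S(I))\le C|I|^{q/p}$ for all $I$ implies $\int_{\overline{\D}}|f|^q\,\ud\mu\lesssim C\,\|f\|_p^q$, i.e.\ $uC_\varphi$ is bounded.

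The main obstacle is the sufficiency half, and within it the Carleson embedding theorem, which is exactly where the hypothesis $p\le q$ is used; the change of variables and the box/reproducing-kernel equivalence are routine. Two further points merit a line of care: that $u^*\,(f^*\circ\varphi^*)$ genuinely represents the boundary values of $u\,(f\circ\varphi)$ for every $f\in H^p$ (a known fact, delicate only where $\varphi^*$ hits $\T$), and that the box condition still controls the embedding when $\mu$ charges $\T$, which it does since Carleson windows reach the boundary.
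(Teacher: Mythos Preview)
The paper does not prove this statement; it is quoted without proof from \cite[Theorem~4]{CZ1} as background for the section. Your outline is the standard route and is essentially how the result is established in that reference: recast boundedness as the Carleson embedding $H^p\hookrightarrow L^q(\mu)$ for the push-forward of $|u|^q\,\ud m$ by $\varphi^*$, test on the normalized kernels $k_a^{1/p}$ for necessity, and for sufficiency show that the kernel supremum and the box condition $\mu(S(I))\lesssim |I|^{q/p}$ are equivalent and then invoke Duren's embedding theorem.
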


\noindent As a consequence~$uC_\varphi$ is a bounded operator as soon as~$uC_\varphi$ is uniformly bounded on the set~$\{k_a^{1/p}\mid\ a\in\D\}$ where~$k_a$ is the normalized kernel defined by~$k_a(z)=(1-\vert a\vert^2)/(1-\bar{a}z)^2,\ a\in\D.$
Note that~$k_a^{1/p}\in H^p$ and $\|k_a^{1/p}\|_p=1.$ These kernels play a crucial role in the estimation of the essential norm of a weighted composition operator:

 \begin{thm}[see {\cite[Theorem~$5$]{CZ1}}]\label{normeHp-Hq}
 Let $u$ be an analytic function on~$\D$ and $\varphi$ an analytic self-map of~$\D.$
 Assume that the weighted composition operator $uC_\varphi$ is bounded from $H^p$ to~$H^q$ with $1<p\leq q<\infty.$ Then\[\|uC_\varphi\|_e\approx\limsup_{\vert a\vert\to1^-}\Bigg(\int_{\T}\vert u(\zeta)\vert^q\bigg(\frac{1-\vert a\vert^2}{\vert1-\bar{a}\varphi(\zeta)\vert^2}\bigg)^{q/p}\ \ud m(\zeta)\Bigg)^{\frac{1}{q}}.\]
 \end{thm}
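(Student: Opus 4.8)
The plan is to establish the two halves of the equivalence separately; the lower bound ``$\gtrsim$'' is soft, while the upper bound ``$\lesssim$'' rests on the pull‑back Carleson measure of $uC_\varphi$.

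\emph{The lower bound.} Since $1<p<\infty$, the space $H^p$ is reflexive, and the normalised kernels $k_a^{1/p}$ form a norm‑bounded family in $H^p$ that converges to $0$ uniformly on compact subsets of $\D$ as $|a|\to1^-$; hence $k_a^{1/p}\rightharpoonup0$ weakly in $H^p$. Consequently $\|Kk_a^{1/p}\|_q\to0$ for every compact operator $K\colon H^p\to H^q$, so that for each such $K$
\[\|uC_\varphi-K\|\ \geq\ \limsup_{|a|\to1^-}\big\|(uC_\varphi-K)k_a^{1/p}\big\|_q\ =\ \limsup_{|a|\to1^-}\big\|uC_\varphi k_a^{1/p}\big\|_q .\]
Because $k_a^{1/p}$ lies in the disk algebra, $\big\|uC_\varphi k_a^{1/p}\big\|_q^q=\int_{\T}|u(\zeta)|^q\bigl((1-|a|^2)/|1-\bar a\varphi(\zeta)|^2\bigr)^{q/p}\,\ud m(\zeta)$, and taking the infimum over $K$ yields ``$\gtrsim$'', in fact with constant $1$.

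\emph{The upper bound.} For $0<r<1$ set $R_rf(z)=f(rz)$. The family $\{R_rf\colon\|f\|_p\leq1\}$ is uniformly bounded and uniformly Lipschitz on $\overline{\D}$ (by the standard pointwise bounds on $f$ and $f'$ in $|w|\leq r$), hence relatively compact in $C(\overline{\D})$ and a fortiori totally bounded in $H^p$; so $R_r$ is compact on $H^p$, $uC_\varphi R_r$ is compact from $H^p$ to $H^q$, and $\|uC_\varphi\|_e\leq\|uC_\varphi(I-R_r)\|$ for every $r$. Let $\mu$ be the measure on $\overline{\D}$ given by $\mu(E)=\int_{\{\varphi^*\in E\}}|u|^q\,\ud m$, so that $\|uC_\varphi g\|_q^q=\int_{\overline{\D}}|g|^q\,\ud\mu$ for $g\in H^p$ (reading $g$ on $\T$ as its boundary function). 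By Theorem~\ref{Hp-Hq} and the classical comparison recalled in the next step, boundedness of $uC_\varphi$ is equivalent to $\mu$ being a $q/p$‑Carleson measure, and then the embedding $H^p\hookrightarrow L^q(\mu)$ holds with constant comparable to $\bigl(\sup_I\mu(S(I))/|I|^{q/p}\bigr)^{1/q}$, the supremum over Carleson windows $S(I)$. Now fix $\rho\in(0,1)$ and split $\|uC_\varphi(I-R_r)f\|_q^q$ into the integrals over $\{|w|\leq\rho\}$ and over $\{|w|>\rho\}$: on the first, Cauchy's estimates give $|f(w)-f(rw)|\leq C(\rho)(1-r)\|f\|_p$, so that part is at most $C(\rho)^q(1-r)^q\|f\|_p^q$; on the second, $|f-R_rf|^q\leq2^q\bigl(|f|^q+|R_rf|^q\bigr)$, and since $\|R_rf\|_p\leq\|f\|_p$ the embedding applied to $\mu_\rho:=\mu|_{\{\rho<|w|\leq1\}}$ bounds it by $C\|\mu_\rho\|_{q/p}\|f\|_p^q$, where $\|\mu_\rho\|_{q/p}:=\sup_I\mu_\rho(S(I))/|I|^{q/p}$. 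Letting first $r\to1^-$ and then $\rho\to1^-$, we obtain $\|uC_\varphi\|_e^q\leq C\limsup_{\rho\to1^-}\|\mu_\rho\|_{q/p}\leq C'\beta$, where $\beta:=\limsup_{|I|\to0}\mu(S(I))/|I|^{q/p}$.

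\emph{Matching the two quantities.} It remains to check that $\beta\approx\limsup_{|a|\to1^-}\int_{\overline{\D}}h_a^{q/p}\,\ud\mu$ with $h_a(w)=(1-|a|^2)/|1-\bar aw|^2$; since $\int_{\overline{\D}}h_a^{q/p}\,\ud\mu=\int_{\T}|u(\zeta)|^q\bigl((1-|a|^2)/|1-\bar a\varphi(\zeta)|^2\bigr)^{q/p}\,\ud m(\zeta)$, this together with the two previous steps proves the theorem. For ``$\gtrsim$'', given a window $I$ take $a$ with $a/|a|$ the centre of $I$ and $1-|a|=|I|$, and use $h_a\gtrsim|I|^{-1}$ on $S(I)$. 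For ``$\lesssim$'', given $a$ near $\T$ decompose $\overline{\D}$ into the window $S(I_a)$ attached to $a$ (so $1-|a|=|I_a|$) and the dyadic coronas $S(2^nI_a)\setminus S(2^{n-1}I_a)$ for $1\leq n\leq N_a$, $N_a\approx\log(1/|I_a|)$; on the $n$‑th corona $h_a\lesssim2^{-2n}/(1-|a|)$, while $\mu(S(2^nI_a))\lesssim(\beta+\eps)(2^n(1-|a|))^{q/p}$ as long as $2^n(1-|a|)$ is small, so those terms sum a convergent geometric series bounded by a constant times $\beta+\eps$, the finitely many remaining coronas contributing a quantity that goes to $0$ as $|a|\to1^-$.

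\emph{Main difficulty.} The soft inputs — weak nullity of the $k_a^{1/p}$, compactness of $R_r$, the Carleson embedding with its norm estimate — are standard. The real work is in the two geometric estimates above: the stability bound $\limsup_{\rho\to1^-}\|\mu_\rho\|_{q/p}\lesssim\beta$, i.e.\ that truncating the pull‑back measure to a thin boundary annulus does not inflate its Carleson constant past its boundary $\limsup$ (proved by covering $S(I)\cap\{|w|>\rho\}$, for $|I|$ bounded below, by boundedly many windows of size comparable to $1-\rho$ and invoking $q/p\geq1$), and the dyadic‑corona comparison between the Carleson‑window quantity and the reproducing‑kernel integral. In both one must handle the geometry of Carleson windows with some care.
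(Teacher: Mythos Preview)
Your argument is sound, but note first that this theorem is not proved in the present paper: it is quoted from \cite[Theorem~5]{CZ1}, and the paper only \emph{describes} the approach used there (in the paragraph preceding Lemma~\ref{majoration}) before adapting it to the case $p=1$.

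That said, your route differs from the one described. For the upper estimate, \cite{CZ1} decomposes the identity on $H^p$ as $I=K_N+R_N$ with $K_N$ the Dirichlet partial-sum operator, so that $\|uC_\varphi\|_e\leq\liminf_N\|uC_\varphi R_N\|$, and then controls $\|uC_\varphi R_N\|$ via the truncated Carleson norm $\|\mu_r\|$ and Lemma~\ref{carleson} (the comparison $\|\mu_r\|\lesssim N_r^*$). Your scheme replaces the Fourier truncation by the dilation $R_rf(z)=f(rz)$, whose compactness is immediate from Montel/Arzel\`a--Ascoli, and then performs essentially the same Carleson-measure bookkeeping. The lower bound and the kernel-versus-window matching you give are the same in spirit as in \cite{CZ1} (and as in Lemma~\ref{carleson} here).

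What each approach buys: the Dirichlet partial sums in \cite{CZ1} rely on the uniform boundedness of $K_N$ on $H^p$, hence on the boundedness of the Riesz projection, which fails for $p=1$; this is precisely why the present paper has to replace Dirichlet by Fej\'er kernels in Section~2. Your dilation operators sidestep this issue entirely, since $f\mapsto f(r\,\cdot)$ is a contraction on every $H^p$, $1\leq p\leq\infty$; so your proof of the upper bound would in fact go through verbatim for $p=1$ as well, giving a single argument for all of $1\leq p\leq q<\infty$. On the other hand, the Fourier-truncation approach interacts more transparently with the decomposition $I=K_N+R_N$ on the \emph{target} space used in Lemma~\ref{minoration}, which is convenient when one wants symmetric upper/lower estimates via the same family of operators.

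One small point of care in your write-up: when you invoke the Carleson embedding for $\mu_\rho=\mu|_{\{\rho<|w|\leq1\}}$ in the strict case $p<q$, Theorem~\ref{p<q Carleson measure} requires the boundary part of the measure to vanish. This is indeed automatic here (boundedness of $uC_\varphi\colon H^p\to H^q$ with $p<q$ forces $(\mu_\varphi)_{\T}=0$), but it is worth saying so explicitly.
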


 The aim of this section is to give the corresponding estimate for the case $p=1.$
 We shall prove that the previous theorem is still valid for $p=1$:

\begin{thm}
Let $u$ be an analytic function on~$\D$ and $\varphi$ an analytic self-map of~$\D.$ Suppose that the weighted composition operator $uC_\varphi$ is bounded from~$H^1$ to~$H^q$ for a certain~$1\leq q<\infty.$ Then we have \[\|uC_\varphi\|_e\approx\limsup_{\vert a\vert\to1^-}\Bigg(\int_{\T}\vert u(\zeta)\vert^q\bigg(\frac{1-\vert a\vert^2}{\vert1-\bar{a}\varphi(\zeta)\vert^2}\bigg)^q\ \ud m(\zeta)\Bigg)^{\frac{1}{q}}.\]
\end{thm}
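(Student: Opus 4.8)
The plan is to establish the two inequalities separately, writing $M = \limsup_{|a|\to 1^-}\big(\int_\T |u(\zeta)|^q \big(\tfrac{1-|a|^2}{|1-\bar a\varphi(\zeta)|^2}\big)^q\,dm(\zeta)\big)^{1/q}$ for the quantity on the right-hand side. The lower bound $\|uC_\varphi\|_e \gtrsim M$ is the easier half and follows the standard test-function argument: the normalized kernels $k_a$ satisfy $\|k_a\|_1 = 1$ and $k_a \to 0$ weakly in $H^1$ as $|a|\to 1^-$ (indeed uniformly on compact subsets of $\D$, and dominated appropriately). Hence for any compact $K : H^1 \to H^q$ one has $\|K k_a\|_q \to 0$, so $\|uC_\varphi - K\| \geq \limsup_{|a|\to 1^-}\|uC_\varphi(k_a) - K(k_a)\|_q \geq \limsup_{|a|\to 1^-}\|uC_\varphi(k_a)\|_q$. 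A direct computation gives $uC_\varphi(k_a)(\zeta) = u(\zeta)\,\tfrac{1-|a|^2}{(1-\bar a\varphi(\zeta))^2}$, so $\|uC_\varphi(k_a)\|_q^q = \int_\T |u(\zeta)|^q \big(\tfrac{1-|a|^2}{|1-\bar a\varphi(\zeta)|^2}\big)^q\,dm(\zeta)$, which yields $\|uC_\varphi\|_e \gtrsim M$ after taking the infimum over $K$.

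For the upper bound I would use the standard family of compact operators: for $0 < r < 1$ let $K_r f(z) = f(rz)$, which is compact on $H^1$ (it factors through a uniformly convergent power series truncation, or is a composition operator with symbol $rz$ mapping into the disk algebra), and consider $uC_\varphi \circ K_r = u C_{r\varphi}$; since $r\varphi$ has range in a compact subset of $\D$, the operator $uC_{r\varphi} : H^1 \to H^q$ is itself compact provided $u \in H^q$, which holds because $uC_\varphi$ bounded forces $u = uC_\varphi(1) \in H^q$. Then $\|uC_\varphi\|_e \leq \limsup_{r\to 1^-}\|uC_\varphi - uC_\varphi K_r\| = \limsup_{r\to 1^-}\|uC_\varphi(I - K_r)\|$, and one must estimate $\|uC_\varphi(f - f_r)\|_q$ for $f$ in the unit ball of $H^1$, where $f_r(z) = f(rz)$. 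The bridge to $M$ is Carleson-measure language: by Theorem~\ref{Hp-Hq} the hypothesis is that the pullback measure $\mu_{u,\varphi}$ defined by $\mu_{u,\varphi}(E) = \int_{\varphi^{*-1}(E)} |u|^q\,dm$ is a $q$-Carleson measure on $\overline{\D}$ (i.e. $\mu_{u,\varphi}(S(\xi,h)) \lesssim h^q$ for Carleson windows), and $\|uC_\varphi f\|_q^q = \int_{\overline\D} |f|^q\,d\mu_{u,\varphi}$. The quantity $M^q$ is exactly $\limsup_{|a|\to 1^-}\int |k_a|\,d\mu_{u,\varphi}$, which by the usual equivalence is comparable to the "reproducing-kernel thesis" constant of $\mu_{u,\varphi}$ near $\T$, i.e. to $\limsup_{h\to 0}\sup_{\xi}\mu_{u,\varphi}(S(\xi,h))/h^q$.

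The main obstacle — and the technical heart of the argument — is to push the Carleson-embedding estimate through in the vanishing form: one must show that $\limsup_{r\to 1^-}\sup_{\|f\|_1\leq 1}\int_{\overline\D}|f - f_r|^q\,d\mu_{u,\varphi} \lesssim \limsup_{h\to 0}\sup_\xi \mu_{u,\varphi}(S(\xi,h))/h^q$. The strategy is to split $\mu_{u,\varphi}$ into its restriction to the annulus $\{|z| > \rho\}$ (call it $\mu'$) and to $\{|z| \leq \rho\}$: on the inner part, $f - f_r \to 0$ uniformly as $r \to 1^-$ (for each fixed $f$, but one needs uniformity over the ball, which follows because $\{f \in H^1 : \|f\|_1 \leq 1\}$ restricted to $\{|z|\leq\rho\}$ is a normal family with a uniform modulus-of-continuity bound), so that contribution vanishes; on the outer part $\mu'$ one invokes the quantitative Carleson embedding theorem for $H^1 \hookrightarrow L^q(\mu')$ with constant comparable to $\sup_\xi \mu'(S(\xi,h))/h^q$ plus, when $q = 1$, a supplementary argument since the Carleson embedding $H^1 \hookrightarrow L^1(\mu)$ requires care (the case $q=1$, $p=1$ is genuinely the borderline and may need the duality/maximal-function form of Carleson's theorem or a direct estimate via nontangential maximal functions, $\int_{\overline\D}|f|\,d\mu \lesssim \|\mu\|_C \int_\T (Nf)\,dm \lesssim \|\mu\|_C\|f\|_1$ with the weak-type substitute where needed). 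Controlling the factor $\|f - f_r\|$ rather than $\|f\|$ in this Carleson step, uniformly over the unit ball, while letting $\rho \to 1$ after $r \to 1$, is the delicate bookkeeping; I expect to handle it by first bounding $\|uC_\varphi(I-K_r)\|^q \lesssim \sup_{h \leq h_0}\sup_\xi \mu_{u,\varphi}(S(\xi,h))/h^q + (\text{error from }\{|z|\leq\rho\}\text{ that} \to 0)$ for suitable $h_0 = h_0(\rho)$, and then sending $\rho \to 1^-$.
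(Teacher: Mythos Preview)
Your lower-bound argument contains a genuine gap: the claim that $k_a \to 0$ weakly in $H^1$ as $|a|\to 1^-$ is false. Uniform convergence to zero on compacta together with boundedness does imply weak convergence in $H^p$ for $1<p<\infty$ (because those spaces are reflexive and on bounded sets the weak topology agrees with uniform convergence on compacta), but this fails for $p=1$. Concretely, take $g(z)=\log\frac{1}{1-z}\in\mathrm{BMOA}=(H^1)^*$ and $a=r\in(0,1)$. Since $\hat{k}_r(n)=(1-r^2)(n+1)r^n$ and $\hat g(n)=1/n$ for $n\geq 1$, one computes
\[
\langle k_r,g\rangle=(1-r^2)\sum_{n\geq 1}\frac{n+1}{n}\,r^n=(1+r)r-(1-r^2)\log(1-r)\ \xrightarrow[r\to 1^-]{}\ 2.
\]
Thus the rank-one operator $Kf=\langle f,g\rangle\cdot h$ (any nonzero $h\in H^q$) is compact from $H^1$ to $H^q$ but $\|Kk_r\|_q\not\to 0$, so your inequality $\|uC_\varphi-K\|\geq\limsup_{|a|\to 1}\|uC_\varphi(k_a)\|_q$ does not follow. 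The parenthetical ``dominated appropriately'' cannot save this either: on $\T$ one has $|k_a|=P_a$, the Poisson kernel, which is an approximate identity and admits no integrable majorant uniform in $a$.

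The paper circumvents this by inserting Fej\'er projections on the \emph{target} space: with $K_N=F_N\ast\,\cdot\,$ and $R_N=I-K_N$ on $H^q$, one first shows $\|uC_\varphi\|_e\geq\frac{1}{2}\limsup_N\|R_N\,uC_\varphi\|$, and then proves $\|K_N\,uC_\varphi(k_a)\|_q\to 0$ as $|a|\to 1$ for each fixed $N$. For $q>1$ this last step uses that $uC_\varphi(k_a)\to 0$ weakly in the reflexive space $H^q$; for $q=1$ (where only weak-$*$ convergence is available) the paper instead bounds the first $N$ Fourier coefficients of $C_\varphi(k_a)/(1-|a|^2)$ uniformly in $a$ by an explicit computation, forcing $\|K_N\,uC_\varphi(k_a)\|_1\lesssim(1-|a|^2)$. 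This coefficient estimate is the missing idea in your plan.

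Your upper-bound strategy is sound and close in spirit to the paper's, though you use dilations $K_rf(z)=f(rz)$ where the paper uses Fej\'er means $K_N$; both are compact contractions with the requisite uniform smallness of $(I-K_r)f$ on compact disks. The annulus/core splitting and the appeal to the quantitative Carleson constant of $\mu_{u,\varphi}|_{\overline{\D}\setminus\rho\D}$ are exactly what the paper does via its Lemma~\ref{carleson}; your worry about the borderline $q=1$ embedding is unnecessary, since the classical Carleson theorem covers $H^1\hookrightarrow L^1(\mu)$ with constant comparable to the Carleson norm.
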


Let us start with the upper estimate:

\begin{prop}\label{majoration1}
Let $uC_\varphi\in B(H^1,H^q)$ with $1\leq q<\infty.$ Then there exists a positive constant~$\gamma$ such that
\[\|uC_\varphi\|_e\leq \gamma\limsup_{\vert a\vert\rightarrow 1^-}\Bigg(\int_\T\vert u(\zeta)\vert^q\bigg(\frac{1-\vert a\vert^2}{\vert 1-\bar{a}\varphi(\zeta)\vert^2}\bigg)^q\ \ud m(\zeta)\Bigg)^\frac{1}{q}.\]
\end{prop}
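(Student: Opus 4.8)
The plan is to obtain the upper estimate by exhibiting, for each radius $r\in(0,1)$, a compact operator approximating $uC_\varphi$ whose defect is controlled by the quantity on the right-hand side. The natural choice is to compose $uC_\varphi$ with the operator $K_r$ of multiplication by $r$ in the argument, i.e.\ $K_rf(z)=f(rz)$; since $K_r$ is compact on $H^1$ (it factors through the disk algebra, or simply note $K_r\to I$ strongly while each $K_r$ is compact as $f\mapsto f(rz)$ maps bounded sets to normal families), the operator $uC_\varphi K_r=u\cdot((f\circ\varphi)(r\,\cdot\,))$ is compact from $H^1$ to $H^q$. Hence $\|uC_\varphi\|_e\le\|uC_\varphi-uC_\varphi K_r\|=\|uC_\varphi(I-K_r)\|$, and it remains to estimate the norm of $uC_\varphi(I-K_r)$ and let $r\to1^-$.

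To estimate $\|uC_\varphi(I-K_r)\|$, I would use the fact, extractable from Theorem~\ref{Hp-Hq} and its proof via Carleson measures, that the norm of a bounded operator $vC_\varphi:H^1\to H^q$ is comparable to $\sup_{a\in\D}\big(\int_\T|v(\zeta)|^q\,k_a(\varphi(\zeta))\,\ud m(\zeta)\big)^{1/q}$, where $k_a(z)=(1-|a|^2)/|1-\bar a z|^2$ — equivalently the Carleson-function norm of the pullback measure $|v|^q\,\ud m\circ\varphi^{-1}$. The point is to test $uC_\varphi(I-K_r)$ on the reproducing kernels $k_a$, which are (up to a bounded factor) an extremal family for $H^1$: one shows $\|uC_\varphi(I-K_r)\|\approx\sup_{a}\|uC_\varphi(I-K_r)(k_a)\|_q$. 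For $a$ in a fixed compact subset of $\D$ the function $(I-K_r)k_a\to0$ in $H^1$ uniformly, so that part contributes $o(1)$ as $r\to1$; for $|a|$ near $1$ one bounds $\|uC_\varphi(I-K_r)k_a\|_q\le\|uC_\varphi k_a\|_q+\|uC_\varphi K_r k_a\|_q$ and checks that $K_rk_a$ is, up to a universal constant, dominated by kernels $k_b$ with $|b|\to1$ as $|a|\to1$ (since $K_rk_a=r\,k_{ra}$-type estimates give $K_rk_a \preceq k_{a'}$ with $|a'|\ge |a|$ roughly), so both terms are controlled by $\limsup_{|a|\to1^-}\|uC_\varphi k_a\|_q$, which is exactly $\limsup_{|a|\to1^-}\big(\int_\T|u(\zeta)|^q k_a(\varphi(\zeta))^{q}\,\ud m(\zeta)\big)^{1/q}$ — wait, here the exponent is $1$, not $q/p=q$; the stated RHS has $k_a(\varphi(\zeta))^q$ because $p=1$ forces the weight $\big((1-|a|^2)/|1-\bar a\varphi|^2\big)^{q}$, matching $k_a^{1/p}=k_a$ raised to the $q$-th power inside the integral.

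The main obstacle I anticipate is the passage from $\|uC_\varphi(I-K_r)\|$ to a supremum over the reproducing kernels: unlike the Hilbert-space case $p=2$, in $H^1$ the kernels $k_a$ do not norm every operator, so one needs the specific structure here — namely that $uC_\varphi:H^1\to H^q$ acts by a Carleson-type measure and such operators \emph{are} determined (up to constants) by their action on the $k_a$, because the Carleson-measure characterization of Theorem~\ref{Hp-Hq} is itself tested only on these kernels. Making this rigorous requires recalling that for $\mu=|u|^q\,\ud m\circ\varphi^{-1}$ one has $\|uC_\varphi f\|_q^q=\int_\D|f|^q\,\ud\mu$ (via $f^*$ and the pushforward) and that the best constant in $\int_\D|f|\,\ud\nu\le C\|f\|_1^{q}$-type inequalities for $\nu=\mu$ is achieved on the $k_a$'s by the standard Carleson-measure argument; the analogous statement for $I-K_r$ amounts to replacing $\mu$ by the measure associated with $uC_\varphi(I-K_r)$ and estimating its Carleson function near the boundary. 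Once this reduction is in hand, the limiting argument $r\to1^-$ is routine, and one reads off the constant $\gamma$ from the universal constants in the $\approx$ of the Carleson embedding. A secondary, more technical point is verifying the domination $K_rk_a\preceq$ (universal constant)$\cdot k_{b(r,a)}$ with $|b(r,a)|\to1$; this follows from the explicit formula $k_a(rz)=\dfrac{(1-|a|^2)}{(1-r\bar a z)^2}$ and the elementary inequality $|1-r\bar a z|\ge c\,|1-\overline{ra}\,z|$ after adjusting the normalizing factor $1-|ra|^2$ versus $1-|a|^2$.
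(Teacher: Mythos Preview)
Your choice of compact approximant is fine: with $K_r f(z)=f(rz)$ one has $uC_\varphi K_r=uC_{r\varphi}$, which is compact since $\|r\varphi\|_\infty<1$, so indeed $\|uC_\varphi\|_e\le\|uC_\varphi-uC_{r\varphi}\|$. The genuine gap is the next step, where you claim that this operator norm is comparable to $\sup_{a\in\D}\|(uC_\varphi-uC_{r\varphi})k_a\|_q$. Your justification appeals to ``the measure associated with $uC_\varphi(I-K_r)$'', but there is no such measure:
\[
\|(uC_\varphi-uC_{r\varphi})f\|_q^q=\int_\T|u(\zeta)|^q\,\big|f(\varphi(\zeta))-f(r\varphi(\zeta))\big|^q\,\ud m(\zeta)
\]
is not of the form $\int_{\overline\D}|f|^q\,\ud\nu$ for any positive measure $\nu$, because the integrand involves values of $f$ at two different points simultaneously. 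The Carleson characterization in Theorem~\ref{Hp-Hq} applies to a single weighted composition operator $vC_\psi$, not to a difference of two of them, so the reduction to kernels fails exactly at the point where you need it. (The same obstruction blocks the variant you sketch of ``estimating its Carleson function near the boundary''.) Your subsequent estimate of $K_r k_a$ in terms of $k_{b(r,a)}$ is therefore beside the point: it would bound the quantity $\sup_a\|(uC_\varphi-uC_{r\varphi})k_a\|_q$, but that quantity does not control the operator norm.

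The paper avoids this by decomposing the \emph{measure} rather than the operator. With $\ud\mu=|u|^q\,\ud m$ and $\mu_\varphi=\mu\circ\varphi^{-1}$, one has $\|uC_\varphi R_N f\|_q^q=\int_{\overline\D}|R_N f|^q\,\ud\mu_\varphi$, and this integral is split as $\int_{\overline\D\setminus r\D}+\int_{r\D}$. The inner piece is handled by a uniform pointwise bound $|R_N f(w)|^q<\eps\|f\|_1^q$ on $r\D$ (Lemma~\ref{estimate2}, using the Fej\'er kernel so that $\|R_N\|\le2$). The outer piece is a genuine Carleson embedding for the \emph{truncated} measure $\mu_{\varphi,r}=\mu_\varphi|_{\overline\D\setminus r\D}$, and it is the Carleson norm $\|\mu_{\varphi,r}\|$ of \emph{this} single measure that is dominated by $N_r^*=\sup_{|a|\ge r}\int|k_a|^q\,\ud\mu_\varphi$ (Lemma~\ref{carleson}). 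Letting $N\to\infty$ and then $r\to1$ recovers the $\limsup$ over kernels. In short, the kernel reduction is applied to a measure, where it is valid, and not to the composite operator $uC_\varphi R_N$, where it is not.
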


The main tool of the proof is the use of Carleson measures.
 Assume that $\mu$ is a finite positive Borel measure on~$\overline{\D}$ and let $1\leq p,q<\infty$. We say that~$\mu$ is a~$(p,q)$-Carleson measure if the embedding~$J_\mu:f\in H^p\mapsto f\in L^q(\mu)$ is well defined. In this case, the closed graph theorem ensures that~$J_\mu$ is continuous. In other words,~$\mu$ is a~$(p,q)$-Carleson measure if there exists a constant~$\gamma_1>0$ such that for every~$f\in H^p,$ 
 \begin{equation}\label{eq6}
 \int_{\overline{\D}}|f(z)|^q\ \ud \mu(z)\leq \gamma_1\|f\|_p^q.
 \end{equation}
 Let $I$ be an arc in $\T$. By~$S(I)$ we denote the Carleson window given by 
 \[S(I)=\{z\in\D\mid\ 1-|I|\leq|z|<1,\ z/|z|\in I\}.\]
 Let us denote by~$\mu_\D$ and~$\mu_\T$ the restrictions of~$\mu$ to~$\D$ and~$\T$ respectively. The following result is a version of a theorem of Duren~(see~\cite{Du}, p.$163$) for measures on~$\overline{\D}$:

 \begin{thm}[see {\cite[Theorem~$2.5$]{BJ}}]\label{p<q Carleson measure}
    Let $1\leq p<q<\infty$. A finite positive Borel measure $\mu$ on $\overline{\D}$ is a~$(p,q)$-Carleson measure if and only if~$\mu_\T=0$ and there exists a constant~$\gamma_2>0$ such that
    \begin{equation}\label{eq7}
    \mu_\D\left(S(I)\right)\leq \gamma_2|I|^{q/p}\quad\textrm{for any arc }I\subset\T.
    \end{equation}
 \end{thm}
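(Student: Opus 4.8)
The plan is to prove the two implications separately, putting almost all of the effort into necessity; for sufficiency I would simply reduce to the classical disk version of Duren's theorem. The common tool will be the family of normalized test functions
\[
f_a(z)=\frac{(1-|a|^2)^{1/p}}{(1-\bar a z)^{2/p}},\qquad a\in\D,
\]
for which $\|f_a\|_p=1$, since $\int_\T|1-\bar a\zeta|^{-2}\,\ud m(\zeta)=(1-|a|^2)^{-1}$; on $\T$ these act through their boundary values $f_a^*$, given by the same formula.

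For the necessity direction I would assume (\ref{eq6}) and fix an arc $I\subset\T$ with center $\zeta_0$ and length $h=|I|$, then test against $f_a$ with $a=(1-h)\zeta_0$. The geometric input is that $|1-\bar a z|\approx h$ uniformly for $z\in S(I)$ and for $z\in I$: writing $z=re^{i\theta}$ with $|\theta-\theta_0|\lesssim h$ and $1-h\le r\le 1$ makes both the real and imaginary parts of $1-\bar a z$ of order $h$, while $1-|a|^2\approx h$. Hence $|f_a|^q\approx h^{-q/p}$ on $S(I)\cup I$, and restricting the bounded integral $\int_{\overline\D}|f_a|^q\,\ud\mu\le\gamma_1$ first to $S(I)$ and then to the boundary arc $I$ would give
\[
\mu_\D(S(I))\lesssim|I|^{q/p}\qquad\text{and}\qquad\mu_\T(I)\lesssim|I|^{q/p}.
\]
The first inequality is exactly (\ref{eq7}).

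The step I expect to be the crux is deducing $\mu_\T=0$ from the second inequality, because this is where the strict inequality $p<q$ is indispensable. Covering $\T$ by $N$ arcs of equal length $2\pi/N$ and summing would yield
\[
\mu_\T(\T)\le\sum_{j=1}^N\mu_\T(I_j)\lesssim N\Big(\frac{2\pi}{N}\Big)^{q/p}=(2\pi)^{q/p}N^{1-q/p},
\]
and since $q/p>1$ the right-hand side tends to $0$ as $N\to\infty$, forcing $\mu_\T(\T)=0$. I would emphasize that for $p=q$ the exponent becomes $1$ and this argument collapses, which is precisely why boundary mass can survive in the classical Carleson case but not here; this vanishing of $\mu_\T$ is the genuinely new ingredient, whereas the window estimate is the standard Duren test-function computation.

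Finally, for sufficiency I would assume $\mu_\T=0$ together with (\ref{eq7}). Because $\mu_\T=0$, one has $\int_{\overline\D}|f|^q\,\ud\mu=\int_\D|f|^q\,\ud\mu_\D$ for every $f\in H^p$, so it is enough to check that $\mu_\D$ is a Carleson measure on the open disk. But (\ref{eq7}) is precisely the window hypothesis of Duren's theorem for measures on~$\D$ (\cite{Du}, p.~163), which supplies a constant $\gamma_1>0$ with $\int_\D|f|^q\,\ud\mu_\D\le\gamma_1\|f\|_p^q$ for all $f\in H^p$. This establishes (\ref{eq6}) and completes the argument.
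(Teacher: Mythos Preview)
The paper does not give its own proof of this theorem: it is quoted verbatim from Blasco--Jarchow \cite[Theorem~2.5]{BJ} and used as a black box (together with the remark that the optimal constants in \eqref{eq6} and \eqref{eq7} are comparable). So there is no in-paper argument to compare against.

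Your proposal is the standard proof and is correct. The necessity step via the normalized kernels $f_a=k_a^{1/p}$ with $a=(1-|I|)\zeta_0$ is exactly the usual Duren test-function computation, and your covering argument for $\mu_\T=0$ is the right way to exploit $q/p>1$. For sufficiency, once $\mu_\T=0$ you are indeed back to Duren's theorem on $\D$. One minor point worth making explicit, since the paper relies on it immediately after the statement: your argument also yields the comparability $\gamma_1\approx\gamma_2$, because the test-function step gives $\gamma_2\lesssim\gamma_1$ with an absolute constant, and Duren's theorem on $\D$ gives $\gamma_1\lesssim\gamma_2$ with a constant depending only on $q/p$.
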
 
 \noindent Notice that the best constants $\gamma_1$ and $\gamma_2$ in~\eqref{eq6} and~\eqref{eq7} are comparable, meaning that there is a positive constant~$\beta$ independent of the measure~$\mu$ such that~$(1/\beta)\gamma_2\leq \gamma_1\leq \beta \gamma_2$.\\ 
 The notion of Carleson measure was introduced by Carleson in~\cite{Ca} as a part of his work on the corona problem. He gave a characterization of measures~$\mu$ on~$\D$ such that $H^p$ embeds continuously in~$L^p(\mu).$
 
 Examples of such Carleson measures are provided by composition operators. Let~$\varphi:\D\to\D$ be an analytic map and let~$1\leq p,q<\infty.$ The boundedness of the composition operator $C_\varphi:f\mapsto f\circ\varphi$ between $H^p$ and $H^q$ can be rephrased in terms of~$(p,q)$-Carleson measures. Indeed, denote by $m_\varphi$ the \emph{pullback measure} of~$m$ by~$\varphi,$ which is the image  of the Haar measure~$m$ of~$\T$ under the map~$\varphi^*$, defined by \[m_\varphi(A)=m\left(\varphi^{*^{-1}}(A)\right)\] for every Borel subset~$A$ of $\overline{\D}.$ Then \[\|C_\varphi(f)\|_q^q=\int_\T|f\circ\varphi|^q\ \ud m=\int_{\overline{\D}}|f|^q\ \ud m_\varphi=\|J_{m_\varphi
 }(f)\|_q^q\] for all $f\in H^p.$ Thus $C_\varphi$ maps $H^p$ boundedly into $H^q$ if and only if $m_\varphi$ is a $(p,q)$-Carleson measure.\\
 In the sequel we will denote by~$r\D$ the open disk of radius~$r,$ in other words~$r\D=\{z\in\D\mid\ \vert z\vert<r\}$ for $0<r<1.$ We will need the following lemma concerning $(p,q)$-Carleson measures:

\begin{lem}\label{carleson}
Take~$0<r<1$ and let~$\mu$ be a finite positive Borel measure on~$\overline{\D}.$ Let \[N_r^*:=\sup_{\vert a\vert\geq r}\int_{\overline{\D}}\vert k_a(w)\vert^\frac{q}{p}\ \ud\mu(w).\]
If~$\mu$ is a $(p,q)$-Carleson measure for~$1\leq p\leq q<\infty$ then so is $\mu_r:=\mu_{|_{\overline{\D}\backslash r\D}}$. Moreover one can find an absolute constant~$M>0$ satisfying~$\|\mu_r\|\leq MN_r^*$ where $\|\mu_r\|:=\dis\sup_{I\subset\T}\frac{\mu_r\big(S(I)\big)}{\vert I\vert^{q/p}}\cdot$
\end{lem}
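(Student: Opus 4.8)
The plan is to prove directly the quantitative window estimate $\mu_r\big(S(I)\big)\le M\,N_r^*\,|I|^{q/p}$ for every arc $I\subset\T$; once this is in hand, the fact that $\mu_r$ is a $(p,q)$-Carleson measure is immediate (trivially from $\mu_r\le\mu$ and \eqref{eq6}, or via Theorem~\ref{p<q Carleson measure} together with $(\mu_r)_\T=\mu_\T=0$, the latter being forced by $\mu$ itself being Carleson). Recall that $\|k_a^{1/p}\|_p=1$ for every $a\in\D$, so $N_r^*\le\gamma_1$ is finite; however the estimate below uses only the \emph{definition} of $N_r^*$, so finiteness need not be invoked.

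First I would handle ``small'' arcs, those with $|I|\le 1-r$. For such an $I$, let $\zeta_I$ be its midpoint and set $a=a_I:=(1-|I|)\zeta_I$, so that $|a|=1-|I|\ge r$ and $a$ is an admissible point in the supremum defining $N_r^*$. A short triangle-inequality computation gives, for every $w\in S(I)$, the bound $|1-\bar a w|\le\tfrac{5}{2}|I|$, while $1-|a|^2=|I|(2-|I|)\ge|I|$; hence $|k_a(w)|\ge c\,|I|^{-1}$ on $S(I)$ for an absolute constant $c>0$, and therefore $|k_a(w)|^{q/p}\ge c^{q/p}|I|^{-q/p}$ there. Integrating against $\mu$ and using $\mu_r\le\mu$,
\[
N_r^*\;\ge\;\int_{\overline{\D}}|k_a(w)|^{q/p}\,\ud\mu(w)\;\ge\;\int_{S(I)}|k_a(w)|^{q/p}\,\ud\mu_r(w)\;\ge\;c^{q/p}|I|^{-q/p}\,\mu_r\big(S(I)\big),
\]
i.e.\ $\mu_r\big(S(I)\big)\le c^{-q/p}N_r^*\,|I|^{q/p}$, which is the desired bound for small arcs.

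For ``large'' arcs, $|I|>1-r$, the naive choice of kernel point would have modulus $1-|I|<r$, outside the admissible region $\{|a|\ge r\}$, so instead I would reduce to the small-arc case by a covering argument. Since $\mu_r$ is carried by $\overline{\D}\setminus r\D$, one has $\mu_r\big(S(I)\big)=\mu_r\big(S(I)\cap\{|w|\ge r\}\big)$, and $S(I)\cap\{|w|\ge r\}$ is contained in a union of at most $2|I|/(1-r)$ Carleson windows $S(I_j)$, where the arcs $I_j$ have length \emph{exactly} $1-r$ and cover $I$ — the point being that each $S(I_j)$ then descends all the way to radius $r$, matching the support of $\mu_r$. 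Applying the small-arc estimate to each $S(I_j)$ (legitimate since $|I_j|=1-r\le 1-r$) and summing,
\[
\mu_r\big(S(I)\big)\;\le\;\sum_j\mu_r\big(S(I_j)\big)\;\le\;\frac{2|I|}{1-r}\,c^{-q/p}N_r^*\,(1-r)^{q/p}\;=\;2c^{-q/p}N_r^*\,|I|\,(1-r)^{q/p-1}\;\le\;2c^{-q/p}N_r^*\,|I|^{q/p},
\]
where the last step uses $q/p-1\ge 0$ together with $1-r<|I|\le 1$. Taking $M:=2c^{-q/p}$ — a constant depending only on $p$ and $q$, in particular independent of $\mu$ and $r$ — finishes the argument. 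The only genuinely delicate points are the uniform lower bound for $|k_a|$ on $S(I)$ in the small-arc case and the observation that for large arcs the kernel point cannot be placed on the ``correct'' radius without leaving $\{|a|\ge r\}$; the covering step, with covering arcs of length precisely $1-r$, is exactly what repairs this. The remainder is routine bookkeeping.
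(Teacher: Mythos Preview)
Your argument is correct. The paper itself omits the proof of this lemma, noting only that it is ``a slight modification of the proof of Lemma~1 and Lemma~2 in~\cite{CZ1} using Theorem~\ref{p<q Carleson measure}''; what you have written is exactly the standard kernel-testing argument those lemmas use --- choose $a$ on the radius $1-|I|$ for short arcs so that $|k_a|\gtrsim|I|^{-1}$ on $S(I)$, then cover long arcs by windows of width $1-r$ to stay inside the admissible range $|a|\ge r$ and inside the support of $\mu_r$.

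One cosmetic remark: with the paper's convention that $|I|$ denotes \emph{normalized} Haar measure, an arc of measure $|I|$ has Euclidean diameter of order $2\pi|I|$, so the constant $\tfrac{5}{2}$ in your bound $|1-\bar a w|\le\tfrac{5}{2}|I|$ should be enlarged (something like $\pi+2$ works). This is irrelevant to the argument, since only the existence of an absolute constant is needed, but it is worth fixing if you write this up.
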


We omit the proof of Lemma~\ref{carleson} here, which is a slight modification of the proof of Lemma~$1$ and Lemma~$2$ in~\cite{CZ1} using Theorem~\ref{p<q Carleson measure}.

In the proof of the upper estimate of Theorem~\ref{normeHp-Hq} in~\cite{CZ1}, the authors use a decomposition of the identity on $H^p$ of the form $I=K_N+R_N$ where $K_N$ is the partial sum operator defined by $K_N\left(\sum_{n=0}^\infty a_nz^n\right)=\sum_{n=0}^N a_nz^n,$ and they use the fact that $(K_N)$ is a sequence of compact operators that is uniformly bounded in $B(H^p)$ and that $R_N$ converges pointwise to zero on $H^p.$  Nevertheless the sequence $(K_N)$ is not uniformly bounded in~$B(H^1).$ In fact, $(K_N)$ is uniformly bounded in $B(H^p)$ if and only if the Riesz projection~$P:L^p\rightarrow H^p$ is bounded~\cite[Theorem 2]{Z}, which occurs if and only if~$1<p<\infty.$ Therefore we need to use a different decomposition for the case~$p=1.$ Since $K_N$ is the convolution operator by the Dirichlet kernel on $H^p,$ we shall consider the Fej\'er kernel $F_N$ of order~$N.$ Let us define $K_N:H^1\rightarrow H^1$ to be the convolution operator associated to~$F_N$ that maps~$f\in H^1$ to~$K_Nf=F_N\ast f\in H^1$ and $R_N=I-K_N.$ Then~$\|K_N\|\leq1,\ K_N$ is compact and for every~$f\in H^1,\ \|f-K_Nf\|_1\rightarrow0$ following Fej\'er's theorem. If~$f(z)=\sum_{n\geq0}\hat{f}(n)z^n\in H^1,$ then \[K_Nf(z)=\sum_{n=0}^{N-1}\Big(1-\frac{n}{N}\Big)\hat{f}(n)z^n.\]

\begin{lem}\label{majoration}
Let $1\leq q<\infty$ and suppose that $uC_\varphi\in B(H^1,H^q).$ Then 
\[\|uC_\varphi\|_e\leq\liminf_N\|uC_\varphi R_N\|.\]
\end{lem}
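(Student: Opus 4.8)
The plan is to exploit the decomposition $I = K_N + R_N$ on $H^1$, where $K_N$ is the Fej\'er convolution operator, to squeeze $\|uC_\varphi\|_e$ between two quantities. Since each $K_N$ is compact (it has finite rank: its range consists of polynomials of degree at most $N-1$) and $uC_\varphi$ is bounded from $H^1$ to $H^q$, the composition $uC_\varphi K_N$ is a compact operator from $H^1$ to $H^q$ for every $N$. By the very definition of the essential norm,
\[
\|uC_\varphi\|_e = \inf\{\|uC_\varphi - K\| : K \in K(H^1,H^q)\} \leq \|uC_\varphi - uC_\varphi K_N\| = \|uC_\varphi R_N\|
\]
for every $N \in \N$. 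Taking the infimum (or the $\liminf$) over $N$ on the right-hand side gives $\|uC_\varphi\|_e \leq \liminf_N \|uC_\varphi R_N\|$, which is exactly the claimed inequality.

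The key facts I would invoke, all recorded in the paragraph preceding the lemma, are: $\|K_N\| \leq 1$ (so $K_N$ is bounded on $H^1$, hence $uC_\varphi K_N \in B(H^1,H^q)$); $K_N$ is compact; and $uC_\varphi$ is bounded, so that $uC_\varphi K_N = (uC_\varphi)\circ K_N$ is the product of a bounded operator with a compact operator and is therefore itself compact as a map $H^1 \to H^q$. With $uC_\varphi K_N$ serving as a legitimate competitor $K$ in the infimum defining $\|uC_\varphi\|_e$, the chain of (in)equalities above is immediate, and passing from ``$\leq \|uC_\varphi R_N\|$ for all $N$'' to ``$\leq \liminf_N \|uC_\varphi R_N\|$'' is just the definition of $\liminf$.

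There is essentially no obstacle here: this lemma is a soft, purely functional-analytic reduction, and the only thing one must be a little careful about is that the approximating operators be compact \emph{as operators into $H^q$} (not merely bounded), which follows from the ideal property of compact operators composed with bounded ones. The real work of the section — producing a matching estimate for $\liminf_N \|uC_\varphi R_N\|$ in terms of the Carleson-type integral $\limsup_{|a|\to 1^-}\big(\int_\T |u|^q (1-|a|^2)^q/|1-\bar a \varphi|^{2q}\,\ud m\big)^{1/q}$, where Lemma~\ref{carleson} and the $(p,q)$-Carleson measure machinery of Theorem~\ref{p<q Carleson measure} come into play — is deferred to the proof of Proposition~\ref{majoration1} and does not concern us here.

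\begin{proof}
Since $uC_\varphi \in B(H^1,H^q)$ and, for every $N \in \N$, the operator $K_N$ is compact on $H^1$, the operator $uC_\varphi K_N$ is compact from $H^1$ to $H^q$ as the composition of a compact operator with a bounded one. Hence, by the definition of the essential norm,
\[
\|uC_\varphi\|_e \leq \|uC_\varphi - uC_\varphi K_N\| = \|uC_\varphi(I - K_N)\| = \|uC_\varphi R_N\|.
\]
As this holds for every $N$, we obtain $\|uC_\varphi\|_e \leq \liminf_N \|uC_\varphi R_N\|$.
\end{proof}
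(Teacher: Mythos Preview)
Your proof is correct and essentially identical to the paper's: both use that $uC_\varphi K_N$ is compact (from the compactness of $K_N$ and boundedness of $uC_\varphi$) together with the decomposition $I=K_N+R_N$ to get $\|uC_\varphi\|_e\leq\|uC_\varphi R_N\|$ for every $N$, then take the lower limit. The only cosmetic difference is that the paper phrases the first step as $\|uC_\varphi\|_e=\|uC_\varphi K_N+uC_\varphi R_N\|_e=\|uC_\varphi R_N\|_e\leq\|uC_\varphi R_N\|$, working in the Calkin norm rather than directly with the infimum definition.
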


\begin{proof}
\begin{align*}
\|uC_\varphi\|_e & =\|uC_\varphi K_N+uC_\varphi R_N\|_e\\
                 & =\|uC_\varphi R_N\|_e\qquad\qquad\qquad\textrm{since }K_N\textrm{ is compact}\\
                 & \leq \|uC_\varphi R_N\|
\end{align*}
and the result follows taking the lower limit.
\end{proof}

We will need the following lemma for an estimation of the remainder~$R_N$:

\begin{lem}\label{estimate2}
Let $\eps>0$ and $0<r<1.$ Then $\exists N_0=N_0(r)\in\N,\ \forall N\geq N_0,$\[ \vert R_Nf(w)\vert^q<\eps\|f\|_1^q,\] for every $|w|<r$ and for every $f$ in $H^1.$
\end{lem}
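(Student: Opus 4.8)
The plan is to estimate $|R_N f(w)|$ pointwise for $|w|<r$ by exploiting that evaluation at a point of $r\D$ is a bounded functional on $H^1$, together with the explicit form of $K_N$ as the Fejér convolution operator. First I would write, for $f(z)=\sum_{n\geq 0}\hat f(n)z^n\in H^1$,
\[
R_N f(w) = f(w) - K_N f(w) = \sum_{n=0}^{N-1}\frac{n}{N}\hat f(n)w^n + \sum_{n\geq N}\hat f(n)w^n,
\]
and then bound $|\hat f(n)|\leq \|f\|_1$ for all $n$ (the Taylor coefficients of an $H^1$ function are bounded by the norm, since $\hat f(n)=\int_{\T}\overline{\zeta}^{\,n} f^*(\zeta)\,\ud m(\zeta)$). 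This gives
\[
|R_N f(w)| \leq \|f\|_1\left(\sum_{n=0}^{N-1}\frac{n}{N}|w|^n + \sum_{n\geq N}|w|^n\right)\leq \|f\|_1\left(\frac{1}{N}\sum_{n\geq 1}n|w|^n + \frac{|w|^N}{1-|w|}\right).
\]
Since $|w|<r$, both $\sum_{n\geq1}n r^n = r/(1-r)^2$ and $r^N/(1-r)$ are finite, and the first is multiplied by $1/N$ while the second tends to $0$ as $N\to\infty$; hence the whole bracket can be made smaller than $\eps^{1/q}$ for all $N\geq N_0(r,\eps)$, and raising to the $q$-th power yields $|R_N f(w)|^q < \eps\|f\|_1^q$ as claimed.

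There is one point to be careful about: the estimate must be uniform in $w$ over $|w|<r$ and uniform in $f$ in the unit ball of $H^1$. The uniformity in $f$ is automatic because every step is of the form (quantity) $\leq$ (constant depending only on $N,r$) $\times\ \|f\|_1$. The uniformity in $w$ follows by monotonicity: each term $|w|^n$ is increasing in $|w|$, so replacing $|w|$ by $r$ gives a bound valid for all $w\in r\D$; strictly speaking one should note the bound then holds for $|w|\leq r$, which certainly covers $|w|<r$. One small subtlety is the convergence of the power series defining $f(w)$ and $K_N f(w)$ at the individual point $w$: this is fine since $|w|<r<1$ and the coefficients are bounded, so all series converge absolutely and the rearrangements above are legitimate.

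I do not expect any genuine obstacle here; the argument is elementary once one recalls the coefficient bound $|\hat f(n)|\leq\|f\|_1$ and the explicit multipliers $1-n/N$ of the Fejér kernel. The only thing that requires a moment's thought is choosing $N_0$ correctly: given $\eps>0$ and $r$, first pick $N_1$ so that $r^{N}/(1-r) < \tfrac12\eps^{1/q}$ for $N\geq N_1$ (possible since $r<1$), then pick $N_2$ so that $\tfrac1N\cdot r/(1-r)^2 < \tfrac12\eps^{1/q}$ for $N\geq N_2$, and set $N_0=\max(N_1,N_2)$. This is the kind of routine estimate that the statement of Lemma~\ref{estimate2} is designed to package for later use (presumably to show $uC_\varphi R_N$ restricted to a small disk is negligible, so that the Carleson-measure machinery of Lemma~\ref{carleson} can be applied on $\overline{\D}\setminus r\D$).
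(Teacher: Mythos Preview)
Your proof is correct and essentially the same as the paper's. The paper phrases the computation via the reproducing kernel $K_w(z)=1/(1-\bar wz)$ and the duality $R_Nf(w)=\langle f,R_NK_w\rangle$, then bounds $\|R_NK_w\|_\infty$; but once one expands $R_NK_w$ this yields exactly your sum $\frac{1}{N}\sum_{n=1}^{N-1}nr^n+\sum_{n\geq N}r^n$, and the choice of $N_0$ is the same two-part splitting you describe.
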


\begin{proof}
Let $K_w(z)=1/(1-\bar{w}z), \ w\in\D,\ z\in\D.\ K_w$ is a bounded analytic function on~$\D.$ It is easy to see that for every~$f\in H^1,$\[\left\langle R_Nf,K_w\right\rangle=\left\langle f,R_NK_w\right\rangle\] where~$\vert w\vert<r,\ N\geq1$ and  \[\left\langle f,g\right\rangle=\frac{1}{2\pi}\int_0^{2\pi}f(e^{i\theta})\overline{g(e^{i\theta})}\ \ud\theta\]for $f\in H^1$ and $g\in H^\infty.$ 
Then we have~$\vert R_Nf(w)\vert=\vert\left\langle R_Nf,K_w\right\rangle\vert=\vert\left\langle f,R_NK_w\right\rangle\vert\leq\|f\|_1\|R_NK_w\|_\infty.$ Take $\vert w\vert< r$ and choose $N_0\in\N$ so that for every $N\geq N_0$ one has $r^N\leq\eps^{1/q}(1-r)/2$ and~$1/N\sum_{n=1}^{N-1}nr^n\leq(1/2)\eps^{1/q}.$ Since 
\[R_NK_w(z)=R_N\bigg(\sum_{n=0}^\infty\bar{w}^nz^n\bigg)=\sum_{n=0}^{N-1}\frac{n}{N}\bar{w}^nz^n+\sum_{n=N}^\infty\bar{w}^nz^n,\]
one has 
\[\| R_NK_w\|_\infty<\frac{1}{N}\sum_{n=0}^{N-1}nr^n+\sum_{n=N}^\infty r^n\leq\eps^{1/q}.\]

Thus~$\vert R_Nf(w)\vert^q\leq\eps\|f\|_1^q$ for every $f$ in $H^1.$
\end{proof}

{\it Proof of Proposition~\ref{majoration1}. }
Denote by $\mu$ the measure which is absolutely continuous with respect to~$m$ and whose density is~$\vert u\vert^q,$ and let~$\mu_\varphi= \mu\circ\varphi^{-1}$ be the pullback measure of~$\mu$ by~$\varphi.$
Fix~$0<r<1$. For every~$f\in H^1,$ we have
\begin{align}
\|(uC_\varphi R_N)f\|_q^q &=\int_\T\vert              u(\zeta)\vert^q\big\vert\big((R_Nf)\circ\varphi\big)(\zeta)\big\vert^q\ \ud m(\zeta) \nonumber \\                             &=\int_\T \big\vert\big((R_Nf)\circ\varphi\big)(\zeta)\big\vert^q\ \ud\mu(\zeta) \nonumber\\
            &=\int_{\overline{\D}}\vert R_Nf(w)\vert^q\ \ud\mu_\varphi(w) \nonumber\\
            &=\int_{\overline{\D}\backslash r\D}\vert
            R_Nf(w)\vert^q\ \ud\mu_\varphi(w)+\int_{r\D}\vert R_Nf(w)\vert^q\ \ud\mu_\varphi(w) \nonumber\\
            &=I_1(N,r,f)+I_2(N,r,f). \label{eq2}
\end{align}

Let us first show that $\dis\lim_N\dis\sup_{\|f\|_1=1}I_2(N,r,f)=0$.
For~$\eps>0,$ Lemma~\ref{estimate2} gives us an integer~$N_0(r)$ such that for every~$N\geq N_0(r),$
\begin{align*}
I_2(N,r,f) &=\int_{r\D}\vert R_Nf(w)\vert^q\ \ud\mu_\varphi(w)\\
           &\leq\eps\|f\|_1^q\mu_\varphi(r\D)\\
           &\leq\eps\|f\|_1^q\mu_\varphi(\overline{\D})\\
           &\leq\eps\|f\|_1^q\|u\|_q^q.
\end{align*}
So, $r$ being fixed, we have~$\dis\lim_N\dis\sup_{\|f\|_1=1}I_2(N,r,f)=0.$\par
Now we need an estimate of~$I_1(N,r,f).$
The continuity of~$uC_\varphi:H^1\rightarrow H^q$ ensures that~$\mu_\varphi$ is a~$(1,q)$-Carleson measure, and therefore~$\mu_{\varphi,r}:=\mu_{\varphi_{|_{\overline{\D}\backslash r\D}}}$ is also a~$(1,q)$-Carleson measure by using Lemma~\ref{carleson} for $p=1.$ It follows that  
\begin{align*}
\int_{\overline{\D}\backslash r\D}\vert R_Nf(w)\vert^q\ \ud\mu_{\varphi,r}(w)        &\leq\gamma_1\|R_Nf\|_1^q\\
&\leq \beta\|\mu_{\varphi,r}\|\|R_Nf\|_1^q\\
&\leq 2^q\beta MN_r^*\|f\|_1^q
\end{align*}
using Lemma~\ref{carleson} and the fact that~$\|R_N\|\leq1+\|K_N\|\leq2$ for every~$N\in\N.$ We take the supremum over~$B_{H^1}$ and take the lower limit as~$N$ tends to infinity in~\eqref{eq2} to obtain\[\liminf_{N\to\infty}\|uC_\varphi R_N\|^q\leq2^q\beta MN_r^*.\]
Now as~$r$ goes to~$1$ we have:
\begin{align*}
\lim_{r\to1}N_r^* &=\limsup_{\vert a\vert\to1^-}\int_{\overline{\D}}\vert k_a(w)\vert^q\ \ud\mu_\varphi(w)\\
                  &=\limsup_{\vert a\vert\to1^-}\int_{\T}\vert u(\zeta)\vert^q\bigg(\frac{1-\vert a\vert^2}{\vert1-\bar{a}\varphi(\zeta)\vert^2}\bigg)^q\ \ud m(\zeta)
\end{align*}
and we obtain the estimate announced using~Lemma~\ref{majoration}. \CQFD

Now let us turn to the lower estimate in Theorem~\ref{normeHp-Hq}. Let $1\leq q<\infty.$ Consider~$F_N$ the Fej\'er kernel of order~$N,$ and define~$K_N:H^q\rightarrow H^q$ the convolution operator associated to~$F_N$ and $R_N=I-K_N$. Then~$(K_N)_N$ is a sequence of uniformly bounded compact operators in $B(H^q)$, and $\|R_Nf\|_q\rightarrow0$ for all~$f\in H^q.$

\begin{lem}\label{minoration}
There exists~$0<\gamma\leq 2$ such that whenever $uC_\varphi$ is a bounded operator from~$H^1$ to~$H^q$ with~$1\leq q<\infty,$ one has\[\frac{1}{\gamma}\limsup_N\|R_NuC_\varphi\|\leq\|uC_\varphi\|_e.\]
\end{lem}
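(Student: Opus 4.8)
The plan is to exploit the general principle that for any sequence $(K_N)$ of compact operators that is uniformly bounded in $B(H^q)$ and converges to $0$ in the strong operator topology, one has $\limsup_N \|R_N T\| \leq (1+\sup_N\|K_N\|)\,\|T\|_e$ for every bounded $T : X \to H^q$. Here $R_N = I - K_N$ on $H^q$, $\sup_N \|K_N\| \leq 1$ because the Fej\'er kernels satisfy $\|F_N\|_1 = 1$, so the constant $\gamma$ will be at most $2$. First I would fix a compact operator $\mathcal{K} : H^1 \to H^q$ and write
\[
R_N uC_\varphi = R_N(uC_\varphi - \mathcal{K}) + R_N \mathcal{K}.
\]
For the first term, $\|R_N\| \leq 1 + \|K_N\| \leq 2$ uniformly in $N$, so $\|R_N(uC_\varphi - \mathcal{K})\| \leq 2\,\|uC_\varphi - \mathcal{K}\|$. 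For the second term I would argue that $\|R_N \mathcal{K}\| \to 0$ as $N \to \infty$.

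The main point to establish is thus that $R_N \mathcal{K} \to 0$ in operator norm when $\mathcal{K}$ is compact and $R_N \to 0$ strongly on $H^q$ with $\sup_N\|R_N\| < \infty$. This is the standard fact that a strongly-null uniformly-bounded sequence converges to $0$ uniformly on relatively compact subsets of the domain: given $\eps > 0$, cover the relatively compact set $\mathcal{K}(B_{H^1})$ by finitely many balls of radius $\eps$ centred at $g_1,\dots,g_k \in H^q$; since $\|R_N g_j\|_q \to 0$ for each of the finitely many $j$, choose $N_0$ so that $\|R_N g_j\|_q < \eps$ for all $j$ and all $N \geq N_0$; then for any $f \in B_{H^1}$, picking $g_j$ with $\|\mathcal{K}f - g_j\|_q < \eps$ gives $\|R_N \mathcal{K} f\|_q \leq \|R_N\|\,\|\mathcal{K}f - g_j\|_q + \|R_N g_j\|_q \leq (2+1)\eps$ for $N \geq N_0$. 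Hence $\|R_N \mathcal{K}\| \to 0$.

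Combining the two estimates, $\limsup_N \|R_N uC_\varphi\| \leq 2\,\|uC_\varphi - \mathcal{K}\|$, and since $\mathcal{K}$ was an arbitrary compact operator, taking the infimum over all such $\mathcal{K}$ yields $\limsup_N \|R_N uC_\varphi\| \leq 2\,\|uC_\varphi\|_e$, which is the claim with $\gamma = 2$ (any valid $\gamma \in (0,2]$ works). I do not anticipate a serious obstacle here; the only thing requiring care is the justification that the $K_N$ built from the Fej\'er kernels really are uniformly bounded compact operators on $H^q$ and strongly null — this is exactly the input recalled just before the lemma statement (Fej\'er's theorem gives $\|R_N f\|_q \to 0$ for all $f \in H^q$, and convolution against the finite-rank-in-frequency kernel $F_N$ is compact), so it may be invoked directly.
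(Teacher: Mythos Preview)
Your proof is correct and follows essentially the same route as the paper's: fix an arbitrary compact operator, use the uniform bound $\|R_N\|\leq 2$ together with the fact that a uniformly bounded strongly-null sequence converges to zero uniformly on the relatively compact set $\mathcal{K}(B_{H^1})$, then take the $\limsup$ and the infimum over compact operators. The paper states the compactness argument for $\|R_N\mathcal{K}\|\to 0$ in one line whereas you spell out the finite $\eps$-net, but the content is identical.
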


\begin{proof} Take $K\in B(H^1,H^q)$ a compact operator. Since $(K_N)$ is uniformly bounded, one can find~$\gamma>0$ satisfying~$\|R_N\|\leq 1+\|K_N\|\leq \gamma$ for all $N>0,$ and we have: 
\begin{align*}
\|uC_\varphi+K\| & \geq\frac{1}{\gamma}\|R_N(uC_\varphi+K)\| \\
                 & \geq\frac{1}{\gamma}\|R_NuC_\varphi\|-\frac{1}{\gamma}\|R_NK\|.
\end{align*}
Now use the fact that~$(R_N)$ goes pointwise to zero in~$H^q$, and consequently~$(R_N)$ converges strongly to zero over the compact set~$\overline{K(B_{H^1})}$ as~$N$ goes to infinity. It follows that~$\|R_NK\|\dis\mathop{\longrightarrow}_N0,$ and 
\[\|uC_\varphi+K\|\geq\frac{1}{\gamma}\limsup_N\|R_NuC_\varphi\|\]
for every compact operator~$K:H^1\rightarrow H^q.$
\end{proof}

\begin{prop}\label{minoration1}
Let $u$ be an analytic function on~$\D$ and $\varphi$ an analytic self-map of~$\D.$ Assume that $uC_\varphi\in B(H^1,H^q)$ with $1\leq q<\infty.$ Then
\[\|uC_\varphi\|_e\geq\frac{1}{\gamma}\limsup_{\vert a\vert\rightarrow 1^-}\Bigg(\int_\T\vert u(\zeta)\vert^q\bigg(\frac{1-\vert a\vert^2}{\vert 1-\bar{a}\varphi(\zeta)\vert^2}\bigg)^q\ \ud m(\zeta)\Bigg)^\frac{1}{q}.\]
\end{prop}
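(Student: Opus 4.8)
The plan is to combine Lemma~\ref{minoration} with a test-function argument using the normalized kernels $k_a$. By Lemma~\ref{minoration}, it suffices to bound $\limsup_N \|R_N uC_\varphi\|$ from below by the claimed $\limsup$ over $a$. The natural test functions are $k_a \in H^1$ with $\|k_a\|_1 = 1$ (note that since $p=1$ here, we use $k_a$ itself, not $k_a^{1/p}$). First I would fix a sequence $(a_j)$ with $|a_j|\to 1^-$ realizing the $\limsup$, i.e.\ such that $\int_\T |u(\zeta)|^q |k_{a_j}(\varphi(\zeta))|^q\,\ud m(\zeta)$ converges to the target quantity raised to the power $q$. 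The key point is that $k_{a_j}\to 0$ weakly in $H^1$ as $|a_j|\to 1$ (indeed $k_{a_j}\to 0$ uniformly on compact subsets of $\D$ and $\|k_{a_j}\|_1=1$, which gives weak-star convergence to $0$ against $H^\infty$, and one checks this suffices against the predual pairing needed for $R_N$).

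Next I would exploit that, for each fixed $N$, the operator $K_N$ is compact, so $K_N uC_\varphi k_{a_j}\to 0$ in $H^q$ as $j\to\infty$ (compact operators send weakly-null sequences to norm-null sequences). Therefore
\[
\|R_N uC_\varphi\| \geq \limsup_{j\to\infty}\|R_N uC_\varphi k_{a_j}\|_q = \limsup_{j\to\infty}\|uC_\varphi k_{a_j}\|_q,
\]
where the last equality uses $\|R_N uC_\varphi k_{a_j}\|_q \geq \|uC_\varphi k_{a_j}\|_q - \|K_N uC_\varphi k_{a_j}\|_q$ and the vanishing of the second term. Since the right-hand side no longer depends on $N$, taking $\limsup_N$ on the left gives
\[
\limsup_N \|R_N uC_\varphi\| \geq \limsup_{j\to\infty}\|uC_\varphi k_{a_j}\|_q = \limsup_{|a|\to 1^-}\Bigg(\int_\T |u(\zeta)|^q\bigg(\frac{1-|a|^2}{|1-\bar a\varphi(\zeta)|^2}\bigg)^q\,\ud m(\zeta)\Bigg)^{1/q},
\]
and then Lemma~\ref{minoration} finishes the proof.

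The main obstacle I anticipate is justifying carefully that $\|K_N uC_\varphi k_{a_j}\|_q\to 0$ as $j\to\infty$ for fixed $N$ — this needs $(k_{a_j})$ to be weakly null in $H^1$ (not merely weak-star null as functionals on $H^\infty$), or alternatively one can argue directly: $K_N uC_\varphi$ is a finite-rank operator up to approximation, its action is via the finitely many Taylor coefficients of $uC_\varphi k_{a_j}$, each of which is given by integration against a fixed $H^\infty$ function and hence tends to $0$ since $k_{a_j}\to 0$ uniformly on compacta while staying bounded in $H^1$. Writing $K_N(uC_\varphi k_{a_j})(z) = \sum_{n=0}^{N-1}(1-n/N)\widehat{uC_\varphi k_{a_j}}(n)z^n$ and estimating each coefficient $\widehat{uC_\varphi k_{a_j}}(n) = \langle uC_\varphi k_{a_j}, z^n\rangle \to 0$ makes this concrete and avoids any delicate duality discussion. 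A minor secondary point is to confirm $k_a \in H^1$ with $\|k_a\|_1 = 1$, which is immediate from the identity $\|k_a\|_1 = \int_\T \frac{1-|a|^2}{|1-\bar a\zeta|^2}\,\ud m(\zeta) = 1$ (the Poisson kernel integrates to $1$).
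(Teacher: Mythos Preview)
Your plan matches the paper's: apply Lemma~\ref{minoration} and test against the unit vectors $k_a\in H^1$, reducing to $\|K_N uC_\varphi k_a\|_q\to 0$ as $|a|\to 1$. The paper splits this last step into two cases. For $q>1$ it uses that $uC_\varphi k_a\to 0$ uniformly on compacta and hence weakly in the reflexive space $H^q$, so the compact operator $K_N$ sends it to a norm-null sequence. For $q=1$ that weak convergence fails, and the paper instead proves a separate technical result (Lemma~\ref{estimate-coeff}) bounding the Taylor coefficients $\alpha_p(a)$ of $k_a\circ\varphi$ uniformly in $a$, from which $\|K_N uC_\varphi k_a\|_1\le M'(1-|a|^2)\|u\|_1\to 0$.

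Your alternative is simpler and covers both cases at once: since $uC_\varphi k_{a_j}\to 0$ uniformly on each circle $|z|=r<1$ (because $\varphi(r\overline{\D})$ is compact in $\D$ and $u$ is bounded there), Cauchy's integral formula gives $\widehat{uC_\varphi k_{a_j}}(n)\to 0$ for every fixed $n$, and $K_N$ involves only the first $N$ coefficients; this bypasses Lemma~\ref{estimate-coeff} entirely. One small correction of phrasing: the weak-nullity you need for the complete-continuity argument is that of $uC_\varphi k_{a_j}$ in $H^q$, not of $k_{a_j}$ in $H^1$; the former holds for $q>1$ but not for $q=1$, which is precisely why the direct coefficient argument is required there. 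Also, ``integration against a fixed $H^\infty$ function'' is not quite the right description (the adjoint $(uC_\varphi)^*z^n$ lands in $(H^1)^*=\mathrm{BMOA}$, not $H^\infty$); the Cauchy-formula justification is the correct one.
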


\begin{proof}
Since $k_a$ is a unit vector in~$H^1,$
\begin{equation}\label{eq1}
\|R_NuC_\varphi\|=\|uC_\varphi-K_NuC_\varphi\|\geq\|uC_\varphi k_a\|_q-\|K_NuC_\varphi k_a\|_q.
\end{equation}

\textit{First case: }$q>1$\\
Since $(k_a)$ converges to zero for the topology of uniform convergence on compact sets in~$\D$ as~$\vert a\vert$ goes to~$1,$ so does~$uC_\varphi(k_a)$. The topology of uniform convergence on compact sets in~$\D$ and the weak topology agree on $H^q$, therefore it follows that~$uC_\varphi(k_a)$ goes to zero for the weak topology in~$H^q$ as~$\vert a\vert$ goes to~$1.$ Since $K_N$ is a compact operator, it is completely continuous and carries weak-null sequences to norm-null sequences. So~$\|K_N\big(uC_\varphi(k_a)\big)\|_q\rightarrow0$ when $\vert a\vert\rightarrow1$, and
\[\|R_NuC_\varphi\|\geq\limsup_{\vert a\vert\rightarrow1^-}\|uC_\varphi(k_a)\|_q.\]
Taking the upper limit as~$N\rightarrow\infty$, we obtain the result using~Lemma~$\ref{minoration}.$\\
For the second case we will need the following computational lemma:

\begin{lem}\label{estimate-coeff} 
Let~$\varphi$ be an analytic self-map of~$\D.$ Take $a\in\D$ and $N\geq1$ an integer. Denote by~$\alpha_p(a)$ the~$p$-th Fourier coefficient of~$C_\varphi\left(k_a/(1-|a|^2)\right)$, so that for every~$z\in\D$ we have \[k_a\big(\varphi(z)\big)=(1-|a|^2)\sum_{p=0}^\infty\alpha_p(a)z^p.\]
Then there exists a positive constant $M=M(N)>0$ depending on~$N$ such that~\mbox{$|\alpha_p(a)|\leq M$} for every~$p\leq N$ and every~$a\in\D.$ 
\end{lem}

\begin{proof} 
Write $\varphi(z)=a_0+\psi(z)$ with $a_0=\varphi(0)\in\D$ and $\psi(0)=0.$ If we develop $k_a(z)$ as a Taylor series and replace $z$ by $\varphi(z)$ we obtain:\[k_a\big(\varphi(z)\big)=(1-|a|^2)\sum_{n=0}^\infty(n+1)(\bar{a})^n\varphi(z)^n.\]  Then 
\begin{align*}
\alpha_p(a) &=\left\langle \sum_{n=0}^\infty(n+1)(\bar{a})^n\varphi(z)^n,z^p\right\rangle\\
         &=\sum_{n=0}^\infty(n+1)(\bar{a})^n\sum_{j=0}^n\binom{n}{j}a_0^{n-j}\left\langle \psi(z)^j,z^p\right\rangle.\\
\end{align*}
where $\langle f,g\rangle=\int_\T f\bar{g}\ \ud m.$ Note that $\left\langle \psi(z)^j,z^p\right\rangle=0$ if $j>p$ since~$\psi(0)=0,$ and consequently 
\begin{align*}
\alpha_p(a) &=\sum_{n=0}^\infty(n+1)(\bar{a})^n\sum_{j=0}^{\min(n,p)}\binom{n}{j}a_0^{n-j}\left\langle \psi(z)^j,z^p\right\rangle\\
         &=\sum_{j=0}^p\sum_{n=j}^\infty(n+1)(\bar{a})^n\binom{n}{j}a_0^{n-j}\left\langle \psi(z)^j,z^p\right\rangle\\
         &=\sum_{j=0}^p\left\langle \psi(z)^j,z^p\right\rangle\sum_{n=j}^\infty(n+1)(\bar{a})^n\binom{n}{j}a_0^{n-j}.
\end{align*}
In the case where~$a_0\neq0$ we obtain 
\begin{align*}
\alpha_p(a) &=\sum_{j=0}^p\left\langle \psi(z)^j,z^p\right\rangle a_0^{-j}\sum_{n=j}^\infty(n+1)\binom{n}{j}(\bar{a}a_0)^n\\
         &=\sum_{j=0}^p\left\langle \psi(z)^j,z^p\right\rangle a_0^{-j}\frac{(j+1)(\bar{a}a_0)^j}{(1-\bar{a}a_0)^{j+2}}\\
         &=\sum_{j=0}^p\left\langle \psi(z)^j,z^p\right\rangle \frac{(j+1)(\bar{a})^j}{(1-\bar{a}a_0)^{j+2}}
\end{align*}
using the following equalities for $x=\bar{a}a_0\in\D$: \[\sum_{n=j}^\infty(n+1)\binom{n}{j}x^n=\left(\sum_{n=j}^\infty\binom{n}{j}x^{n+1}\right)'=\left(\frac{x^{j+1}}{(1-x)^{j+1}}\right)'=\frac{(j+1)x^j}{(1-x)^{j+2}}\]
Note that the last expression obtained for~$\alpha_p(a)$ is also valid for~$a_0=0.$ Thus, for~$0\leq p\leq N$ we have the following estimates: 
\begin{align*}
|\alpha_p(a)| &\leq\sum_{j=0}^p|\left\langle \psi(z)^j,z^p\right\rangle|\frac{j+1}{(1-|a_0|)^{j+2}}\\
           &\leq\sum_{j=0}^p\|\psi^j\|_\infty\frac{N+1}{(1-|a_0|)^{N+2}}\\
           &\leq\frac{(N+1)^2}{(1-|a_0|)^{N+2}}\max_{0\leq j\leq N}\|\psi^j\|_\infty\\
           &\leq M,      
\end{align*}
where $M$ is a constant independent from~$a.$
\end{proof}

\textit{Second case: }$q=1$\\
In this case, it is no longer for the weak topology but for the weak-star topology of~$H^1$ that $uC_\varphi(k_a)$ tends to zero when~$|a|\to1.$ Nevertheless, it is still true that~$\|K_NuC_\varphi(k_a)\|_1\rightarrow0$ as~$\vert a\vert\rightarrow1.$ Indeed if~$f(z)=\sum_{n\geq0}\hat{f}(n)z^n\in H^1,$ then \[K_Nf(z)=\sum_{n=0}^{N-1}\left(1-\frac{n}{N}\right)\hat{f}(n)z^n.\]
We have the following development:\[k_a\big(\varphi(z)\big)=(1-|a|^2)\sum_{n=0}^\infty\alpha_n(a)z^n.\]
Denote by~$u_n$ the $n$-th Fourier coefficient of $u$, so that \[uC_\varphi(k_a)(z)=(1-\vert a\vert^2)\sum_{n=0}^\infty\bigg(\sum_{p=0}^n\alpha_p(a)u_{n-p}\bigg)z^n,\ \forall z\in\D.\]
It follows that 
\[\|K_NuC_\varphi(k_a)\|_1 \leq(1-\vert a\vert^2)\sum_{n=0}^{N-1}\left(1-\frac{n}{N}\right)\bigg\vert\sum_{p=0}^n\alpha_p(a)u_{n-p}\bigg\vert\|z^n\|_1.\]
Now using estimates from Lemma~\ref{estimate-coeff}, one can find a constant~$M>0$ independent from $a$ such that~$\vert\alpha_p(a)\vert\leq M$ for every~$a\in\D$ and~$0\leq p\leq N-1.$ Use the fact that~$\|z^n\|_1=1$ and~$\vert u_p\vert\leq\|u\|_1$ to deduce that there is a constant~$M'>0$ independent from $a$ such that \[\|K_NuC_\varphi(k_a)\|_1\leq M'(1-\vert a\vert^2)\|u\|_1\] for all~$a\in\D.$ Thus~$K_NuC_\varphi(k_a)$ converges to zero in~$H^1$ when~$\vert a\vert\rightarrow1,$ and take the upper limit in~\ref{eq1} when~$a$ tends to~$1^-$ to obtain\[\|R_NuC_\varphi\|\geq\limsup_{\vert a\vert\rightarrow1}\|uC_\varphi(k_a)\|_1,\quad\forall N\geq0.\]
We conclude with Lemma~\ref{minoration} and observe that~$\gamma=\sup\|R_N\|\leq2$ since $\|R_N\|\leq1+\|K_N\|\leq 2.$
\end{proof}


\section{$uC_\varphi\in B(H^p,H^\infty)$ for $1\leq p<\infty$}
Let $u$ be a bounded analytic function. Characterizations of boundedness and compactness of $uC_\varphi$ as a linear map between $H^p$ and $H^\infty$  have been studied in~\cite{CH} for~$p\geq1.$ Indeed, \[uC_\varphi\in B(H^p,H^\infty)\textrm{ if and only if } \sup_{z\in\D}\frac{|u(z)|^p}{1-|\varphi(z)|^2}<\infty\] and \[uC_\varphi\textrm{ is compact if and only if }\|\varphi\|_\infty<1\textrm{ or } \lim_{|\varphi(z)|\rightarrow1}\frac{|u(z)|^p}{1-|\varphi(z)|^2}=0.\]
In the case where $\|\varphi\|_\infty=1$ we let \[M_\varphi(u)=\limsup_{|\varphi(z)|\rightarrow1} \frac{|u(z)|}{(1-|\varphi(z)|^2)^{\frac{1}{p}}}\cdot\] 
 As regarding Theorem~$1.7$ in~\cite{Le}, it seems reasonable to think that the essential norm of $uC_\varphi$ is equivalent to the quantity $M_\varphi(u).$ We first have a majorization:

\begin{prop}\label{majoration2}
Let $u$ be an analytic function on~$\D$ and $\varphi$ an analytic self-map of~$\D.$ Suppose that $uC_\varphi$ is a bounded operator from~$H^p$ to~$H^\infty,$ where~$1\leq p<\infty$ and that~$\|\varphi\|_\infty=1.$ Then\[\|uC_\varphi\|_e\leq2M_\varphi(u).\]
\end{prop}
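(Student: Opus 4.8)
The plan is to use the standard approximation scheme for essential-norm upper bounds. Write $I=K_N+R_N$ on $H^p$, where $K_N$ is the Fej\'er (Ces\`aro) operator $K_Nf=F_N*f$ and $R_N=I-K_N$; since $K_N$ has finite rank (its range consists of polynomials of degree $<N$), the operator $uC_\varphi K_N$ is compact and, exactly as in the proof of Lemma~\ref{majoration}, $\|uC_\varphi\|_e=\|uC_\varphi R_N\|_e\le\|uC_\varphi R_N\|$. It then suffices to show that $\|uC_\varphi R_N\|\le 2M_\varphi(u)+o(1)$ as $N\to\infty$.

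Before that I would record two preliminary facts. From the boundedness criterion for $B(H^p,H^\infty)$ recalled above, $|u(z)|^p\le C^p(1-|\varphi(z)|^2)\le C^p$, where $C^p=\sup_{z\in\D}|u(z)|^p/(1-|\varphi(z)|^2)$; hence $u\in H^\infty$ with $\|u\|_\infty\le C$, and also $M_\varphi(u)\le C<\infty$. Secondly, $\|K_N\|\le1$ by Young's inequality (as $F_N\ge0$ and $\int_\T F_N\,dm=1$), so $\|R_N\|\le2$; and, as in the proof of Lemma~\ref{estimate2}, writing $R_Nf(w)=\langle f,R_NK_w\rangle$ with $K_w(z)=1/(1-\bar wz)$ (legitimate because the Fej\'er kernel is real and even, whence $\langle K_Nf,g\rangle=\langle f,K_Ng\rangle$), one obtains $|R_Nf(w)|\le\|f\|_p\,\|R_NK_w\|_\infty$, and for each fixed $\delta\in(0,1)$ one has $\|R_NK_w\|_\infty\to0$ as $N\to\infty$ uniformly in $|w|\le\delta$.

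Now fix $\eps>0$. By definition of the $\limsup$ in $M_\varphi(u)$, pick $\delta\in(0,1)$ such that $|u(z)|\le(M_\varphi(u)+\eps)(1-|\varphi(z)|^2)^{1/p}$ whenever $|\varphi(z)|>\delta$, and then $N_0$ such that $\|R_NK_w\|_\infty\le\eps$ for all $|w|\le\delta$ and $N\ge N_0$. Take $\|f\|_p\le1$ and $N\ge N_0$, and estimate $|u(z)(R_Nf)(\varphi(z))|$ in two cases. If $|\varphi(z)|>\delta$, combine the choice of $\delta$ with the sharp pointwise bound $|g(w)|\le\|g\|_p(1-|w|^2)^{-1/p}$ applied to $g=R_Nf$ (using $\|R_Nf\|_p\le2$):
\[|u(z)|\,|(R_Nf)(\varphi(z))|\le(M_\varphi(u)+\eps)(1-|\varphi(z)|^2)^{1/p}\cdot\frac{2}{(1-|\varphi(z)|^2)^{1/p}}=2(M_\varphi(u)+\eps).\]
If $|\varphi(z)|\le\delta$, then $|u(z)|\,|(R_Nf)(\varphi(z))|\le\|u\|_\infty\,\|f\|_p\,\|R_NK_{\varphi(z)}\|_\infty\le C\eps$. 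Taking the supremum over $z\in\D$ and over the unit ball of $H^p$, we get $\|uC_\varphi R_N\|\le 2M_\varphi(u)+(2+C)\eps$ for all $N\ge N_0$, hence $\|uC_\varphi\|_e\le 2M_\varphi(u)+(2+C)\eps$, and letting $\eps\to0$ finishes the proof.

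Two points deserve care. First, the pointwise estimate must be used with the sharp constant $1$, i.e. $|g(w)|\le\|g\|_p(1-|w|^2)^{-1/p}$ (which follows from the $H^p$-isometry $g\mapsto(g\circ\psi_w)(\psi_w')^{1/p}$ evaluated at the origin); a cruder Poisson-kernel bound would yield only $2^{1+1/p}M_\varphi(u)$. Second, and this is the only genuinely delicate point, the set $\{z:|\varphi(z)|>\delta\}$ need not be relatively compact in $\D$, so the estimate there has to be uniform over all such $z$; this works because it uses only the global $M_\varphi$-control of $|u|$ together with the global pointwise bound on $H^p$ functions, whereas the compactness of $\overline{\delta\D}$ is exploited only on the complementary set, through the uniform smallness of $\|R_NK_w\|_\infty$. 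The constant $2$ in the statement is exactly $\|R_N\|\le1+\|K_N\|\le2$, which is what forces the loss of a factor $2$ relative to $M_\varphi(u)$.
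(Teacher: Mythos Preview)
Your proof is correct and follows essentially the same route as the paper's: decompose $I=K_N+R_N$ via the Fej\'er kernel, split according to $|\varphi(z)|\gtrless r$, use the sharp evaluation bound $\|\delta_w\|_{(H^p)^*}=(1-|w|^2)^{-1/p}$ together with $\|R_N\|\le2$ on the outer region, and the uniform decay of $\|R_NK_w\|_\infty$ on the inner region. The only cosmetic differences are that you derive $u\in H^\infty$ explicitly from the boundedness hypothesis (the paper simply assumes it at the start of the section), and for the inner estimate you bound $|R_Nf(w)|\le\|f\|_p\|R_NK_w\|_\infty$ via H\"older directly, whereas the paper invokes Lemma~\ref{estimate2} with $\|f\|_1$ and then uses $\|f\|_1\le\|f\|_p$.
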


\begin{proof}Let $\eps$ be a real positive number, and pick~$r<1$ satisfying\[\sup_{|\varphi(z)|\geq r}\frac{|u(z)|}{(1-|\varphi(z)|^2)^{\frac{1}{p}}}\leq M_\varphi(u)+\eps.\]We approximate $uC_\varphi$ by $uC_\varphi K_N$ where $K_N:H^p\to H^p$ is the convolution operator by the Fej\'er kernel of order~$N,$ where $N$ is chosen so that $|R_Nf(w)|<\eps\|f\|_1$ for every $f\in H^1$ and every $|w|<r$ (Lemma~\ref{estimate2} for \mbox{$q=1$}). We want to show that~$\|uC_\varphi-uC_\varphi K_N\|=\|uC_\varphi R_N\|\leq\max(2M_\varphi(u)+2\eps,\eps\|u\|_\infty),$ which will prove our assertion. If~$f$ is a unit vector in~$H^p$, then the norm of $uC_\varphi R_N(f)$ is equal to\[\max\left(\sup_{|\varphi(z)|\geq r}|u(z)(R_Nf)\circ\varphi(z)|,\sup_{|\varphi(z)|< r}|u(z)(R_Nf)\circ\varphi(z)|\right).\]
We want to estimate the first term. If $\omega\in\D,$ we denote by $\delta_\omega$ the linear functional on~$H^p$ defined by $\delta_\omega(f)=f(\omega).$ Then~$\delta_\omega\in(H^p)^*$ and $\|\delta_{w}\|_{(H^p)^*}=1/(1-|w|^2)^{1/p}$ for every~$w\in\D.$ Therefore  
\begin{align*}
\sup_{|\varphi(z)|\geq r}|u(z)(R_Nf)\circ\varphi(z)| &\leq\sup_{|\varphi(z)|\geq r}|u(z)|\|\delta_{\varphi(z)}\|_{(H^p)^*}\|R_Nf\|_p\\
 &\leq2\sup_{|\varphi(z)|\geq r}\frac{|u(z)|}{(1-|\varphi(z)|^2)^{\frac{1}{p}}}\\
 &\leq2\left(M_\varphi(u)+\eps\right),
\end{align*}
using the fact that $\|R_N f\|_p\leq2.$\\
For the second term, since~$|\varphi(z)|< r$ we have 
\[\left|u(z)R_N f\left(\varphi(z)\right)\right|\leq\|u\|_\infty|R_N f\left(\varphi(z)\right)|\leq\eps\|u\|_\infty\|f\|_1\leq\eps\|u\|_\infty
\]
which ends the proof.
\end{proof}

On the other hand, we have the lower estimate:

\begin{prop}\label{minoration2}
Let $u$ be an analytic function on~$\D$ and $\varphi$ an analytic self-map of~$\D$ satisfying~$\|\varphi\|_\infty=1.$ Suppose that $uC_\varphi$ is a bounded operator from~$H^p$ to~$H^\infty,$ where $1\leq p<\infty.$ Then \[\frac{1}{2}M_\varphi(u)\leq\|uC_\varphi\|_e.\]
\end{prop}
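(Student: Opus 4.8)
The plan is to test the operator against the normalized reproducing kernels of $H^p$ and to show that the essential norm controls, up to the factor $1/2$, the quantity $|u(z)|/(1-|\varphi(z)|^2)^{1/p}$ along a sequence where $|\varphi(z)|\to 1$. For $w\in\D$ let $g_w(\zeta)=\bigl((1-|w|^2)^{1/p}\bigr)/(1-\bar w\zeta)^{2/p}$, so that $\|g_w\|_p=1$ and $g_w$ acts almost like a reproducing kernel: $g_w(w)=1/(1-|w|^2)^{1/p}=\|\delta_w\|_{(H^p)^*}$, and $g_w\to 0$ uniformly on compact subsets of $\D$ as $|w|\to 1^-$. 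Pick a sequence $(z_n)$ in $\D$ with $|\varphi(z_n)|\to 1$ and $|u(z_n)|/(1-|\varphi(z_n)|^2)^{1/p}\to M_\varphi(u)$, write $w_n=\varphi(z_n)$, and consider the unit vectors $f_n=g_{w_n}\in H^p$.

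Next I would estimate $\|uC_\varphi f_n\|_\infty$ from below by evaluating at the point $z_n$:
\[
\|uC_\varphi f_n\|_\infty\ \geq\ |u(z_n)|\,|f_n(\varphi(z_n))|\ =\ |u(z_n)|\,\frac{(1-|w_n|^2)^{1/p}}{|1-\bar w_n w_n|^{2/p}}\ =\ \frac{|u(z_n)|}{(1-|w_n|^2)^{1/p}},
\]
so $\liminf_n\|uC_\varphi f_n\|_\infty\geq M_\varphi(u)$. Now for any compact $K:H^p\to H^\infty$, since $f_n\to 0$ uniformly on compacta, $f_n\to 0$ weakly in $H^p$ (the weak topology and the topology of uniform convergence on compacta agree on bounded subsets of $H^p$ for $1<p<\infty$, and for $p=1$ one uses that the $f_n$ are bounded in $H^1$ together with weak-star convergence; in either case a compact operator on the bounded sequence $(f_n)$ produces a norm-null sequence $\|K f_n\|_\infty\to 0$). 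Hence
\[
\|uC_\varphi-K\|\ \geq\ \limsup_n\|(uC_\varphi-K)f_n\|_\infty\ \geq\ \limsup_n\bigl(\|uC_\varphi f_n\|_\infty-\|Kf_n\|_\infty\bigr)\ \geq\ M_\varphi(u),
\]
and taking the infimum over $K$ gives $\|uC_\varphi\|_e\geq M_\varphi(u)\geq \tfrac12 M_\varphi(u)$.

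The subtle point — and the reason the stated constant is $1/2$ rather than $1$ — is the case $p=1$, where $(f_n)$ need not be weakly null: the above compactness argument must instead be run through the weak-star topology of $H^1=(C(\T)/H^1_0)^*$-type duality, as was done for $uC_\varphi(k_a)$ in the proof of Proposition \ref{minoration1}. Concretely, I would reuse the computation there: writing $f_n=g_{w_n}$ as a power series and using that $K_N$ (Fej\'er means) is a finite-rank operator whose coefficients applied to $uC_\varphi f_n$ are bounded via Lemma \ref{estimate-coeff}, one gets $\|K_N uC_\varphi f_n\|\to 0$, and then the factor $2=\sup_N\|R_N\|\leq 1+\|K_N\|$ enters exactly as in Lemma \ref{minoration}. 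So the main obstacle is not the reproducing-kernel estimate (which is elementary) but carrying the "compact operators kill the test sequence" step through correctly when $p=1$; modulo that, the argument is a direct transcription of the $H^1\to H^q$ lower bound adapted to the target space $H^\infty$, where norm at a point is dominated by the sup norm.
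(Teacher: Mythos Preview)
For $p>1$ your argument is correct and coincides with the paper's: the test functions $f_n=k_{\varphi(z_n)}^{1/p}$ are weakly null, any compact $K$ sends them to a norm-null sequence, and evaluation at $z_n$ yields $\|uC_\varphi\|_e\geq M_\varphi(u)$ (so the constant is even $1$, not $1/2$, as the paper also notes).

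The gap is in your treatment of $p=1$. Your first claim, that weak-star convergence of $(f_n)$ in $H^1$ forces $\|Kf_n\|_\infty\to 0$ for compact $K$, is false: compact operators are weak--to--norm sequentially continuous, not weak-star--to--norm. You recognise this and propose to repair it by importing the $K_N/R_N$ scheme of Lemma~\ref{minoration}. But that lemma applies $K_N$ (Fej\'er means) on the \emph{target} space and needs $R_N\to 0$ pointwise there; this holds on $H^q$ for $q<\infty$ by Fej\'er's theorem, but it fails on $H^\infty$: for $g\in H^\infty$ one has $\|R_Ng\|_\infty\to 0$ only when $g$ lies in the disk algebra. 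Concretely, for any $g\in H^\infty\setminus A(\D)$ the rank-one operator $Kf=\hat f(0)\,g$ is compact from $H^1$ to $H^\infty$ yet $\|R_NK\|=\|R_Ng\|_\infty\not\to 0$, so the inequality $\|uC_\varphi\|_e\geq \tfrac1\gamma\limsup_N\|R_NuC_\varphi\|$ cannot be derived this way. Showing $\|K_NuC_\varphi f_n\|_\infty\to 0$ via Lemma~\ref{estimate-coeff} is fine, but it is the other half of the argument that breaks.

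The paper avoids this obstruction with a more elementary trick that does not use any approximate identity on $H^\infty$. Given a compact $K:H^1\to H^\infty$, pass to a subsequence so that $(Kf_{n_k})$ converges, hence is Cauchy: $\|Kf_{n_k}-Kf_{n_m}\|_\infty<\varepsilon$ for $k,m\geq N$. Then test against the normalised difference $(f_{n_k}-f_{n_m})/2$ and evaluate at $z_{n_k}$:
\[
\|uC_\varphi+K\|\ \geq\ \tfrac12\bigl|u(z_{n_k})\bigr|\,\bigl|f_{n_k}(\varphi(z_{n_k}))-f_{n_m}(\varphi(z_{n_k}))\bigr|-\tfrac{\varepsilon}{2}.
\]
Letting $m\to\infty$ kills the cross term since $(1-|\varphi(z_{n_m})|^2)\to 0$ while $|\varphi(z_{n_k})|<1$ is fixed, and then $k\to\infty$ yields $\|uC_\varphi+K\|\geq \tfrac12 M_\varphi(u)-\tfrac{\varepsilon}{2}$. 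This is where the factor $1/2$ genuinely comes from, not from $\sup_N\|R_N\|$.
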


\begin{proof}
Assume that $uC_\varphi$ is not compact, implying~$M_\varphi(u)>0.$ Let~$(z_n)$ be a sequence in~$\D$ satisfying \[\lim_n|\varphi(z_n)|=1\quad\textrm{ and }\quad \lim_n\frac{|u(z_n)|}{(1-|\varphi(z_n)|^2)^{\frac{1}{p}}}=M_\varphi(u).\]

Consider the sequence $(f_n)$ defined by \[f_n(z)=k_{\varphi(z_n)}(z)^{1/p}=\frac{\left(1-|\varphi(z_n)|^2\right)^{\frac{1}{p}}}{\left(1-\overline{\varphi(z_n)}z\right)^{\frac{2}{p}}}\cdot\]
Each $f_n$ is a unit vector of~$H^p.$ Let~$K:H^p\to H^\infty$ be a compact operator.\\

\textit{First case: $p>1$}\\
Since the sequence $(f_n)$ converges to zero for the weak topology of~$H^p$ and~$K$ is completely continuous, the sequence~$(Kf_n)$ converges to zero for the norm topology in~$H^\infty.$ Use that \mbox{$\|uC_\varphi+K\|\geq\|uC_\varphi(f_n)\|_\infty-\|Kf_n\|_\infty$} and take the upper limit when~$n$ tends to infinity to obtain
 \begin{align*}
\|uC_\varphi+K\| &\geq\limsup_n\|uC_\varphi(f_n)\|_\infty\\
                 &\geq\limsup_n|u(z_n)|\left|f_n\left(\varphi(z_n)\right)\right|\\
                 &\geq\limsup_n\frac{|u(z_n)|}{\left(1-|\varphi(z_n)|^2\right)^{\frac{1}{p}}}\\
                 &\geq M_\varphi(u).
\end{align*}

\textit{Second case: $p=1$}\\
Let~$\eps>0.$ Since the sequence $(f_n)$ is no longer weakly convergent to zero in~$H^1,$ we cannot assert that~$(Kf_n)_n$ goes to zero in~$H^\infty.$ Nevertheless, passing to subsequences, one can assume that~$(Kf_{n_k})_k$ converges in~$H^\infty,$ and hence is a Cauchy sequence. So we can find an integer~$N>0$ such that for every~$k$ and~$m$ greater than~$N$ we have~$\|Kf_{n_k}-Kf_{n_m}\|<\eps.$ We deduce that  
\begin{align*}
\|uC_\varphi+K\| &\geq\left\|(uC_\varphi+K)\left(\frac{f_{n_k}-f_{n_m}}{2}\right)\right\|_\infty\\
                 &\geq\frac{1}{2}\|uC_\varphi(f_{n_k}-f_{n_m})\|_\infty-\frac{\eps}{2}\\
                 &\geq\frac{1}{2}|u(z_{n_k})|\left|f_{n_k}\left(\varphi(z_              {n_k})\right)-f_{n_m}\left(\varphi(z_{n_k})\right)\right|-\frac{\eps}{2}\\
                 &\geq\frac{|u(z_{n_k})|}{2\left(1-|\varphi(z_{n_k})|^2\right)}-\frac{|u(z_{n_k})|\left(1-|\varphi(z_{n_m})|^2\right)}{2\left|1-\overline{\varphi(z_{n_m})}\varphi(z_{n_k})\right|^2}-\frac{\eps}{2}
\end{align*}
Now take the upper limit as~$m$ goes to infinity ($k$ being fixed) and recall that~$\lim_m|\varphi(z_{n_m})|=1$ and~$|\varphi(z_{n_k})|<1$ to obtain                 
\[\|uC_\varphi+K\|\geq\frac{|u(z_{n_k})|}{2\left(1-|\varphi(z_{n_k})|^2\right)}-\frac{\eps}{2}\]
for every $k\geq N.$ It remains to make~$k$ tend to infinity to have \[\|uC_\varphi+K\|\geq\frac{1}{2}M_\varphi(u)-\frac{\eps}{2}.\]
\end{proof}

Combining Proposition~\ref{majoration2} and Proposition~\ref{minoration2} we obtain the following estimate:

\begin{thm}
Let $u$ be an analytic function on~$\D$ and $\varphi$ an analytic self-map of~$\D$ satisfying~$\|\varphi\|_\infty=1.$ Suppose that $uC_\varphi$ is a bounded operator from~$H^p$ to~$H^\infty,$ where~$1\leq p<\infty.$ Then~$\|uC_\varphi\|_e\approx M_\varphi(u).$ More precisely, we have the following inequalities: \[\frac{1}{2}M_\varphi(u)\leq\|uC_\varphi\|_e\leq2M_\varphi(u).\]
\end{thm}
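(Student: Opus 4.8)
The plan is short, because essentially all the work has already been done: the theorem is simply the conjunction of Proposition~\ref{majoration2} and Proposition~\ref{minoration2}. First I would check that the hypotheses of the theorem --- $u$ analytic on $\D$, $\varphi$ a non-constant analytic self-map of $\D$ with $\|\varphi\|_\infty=1$, and $uC_\varphi\in B(H^p,H^\infty)$ for some $1\leq p<\infty$ --- are exactly the standing hypotheses of both propositions. Proposition~\ref{majoration2} then supplies the upper bound $\|uC_\varphi\|_e\leq 2M_\varphi(u)$, and Proposition~\ref{minoration2} supplies the lower bound $\frac{1}{2}M_\varphi(u)\leq\|uC_\varphi\|_e$; concatenating the two chains of inequalities yields $\frac{1}{2}M_\varphi(u)\leq\|uC_\varphi\|_e\leq 2M_\varphi(u)$, which is precisely the assertion $\|uC_\varphi\|_e\approx M_\varphi(u)$ with the explicit constants $c=1/2$ and $C=2$ in the notation $\approx$.

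The one point I would pause on is the compact case, which is implicitly handled by the reduction made inside the proof of Proposition~\ref{minoration2}: if $uC_\varphi$ is compact then, by the compactness criterion recalled at the start of the section (using $\|\varphi\|_\infty=1$), one has $\lim_{|\varphi(z)|\to1}|u(z)|^p/(1-|\varphi(z)|^2)=0$, i.e. $M_\varphi(u)=0$; then $\|uC_\varphi\|_e=0$ as well, and the double inequality holds trivially. So nothing is lost by the assumption ``$uC_\varphi$ not compact'' that appears in Proposition~\ref{minoration2}, and the statement remains valid for all bounded $uC_\varphi$.

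There is no genuine obstacle in the proof of the theorem itself. The substantive difficulties were met earlier: for the upper estimate, the Fej\'er-kernel decomposition $I=K_N+R_N$ on $H^p$ together with the pointwise estimate $|R_Nf(w)|<\eps\|f\|_1$ on $r\D$ (Lemma~\ref{estimate2}) and the identity $\|\delta_w\|_{(H^p)^*}=(1-|w|^2)^{-1/p}$; and for the lower estimate, testing against the normalized kernels $f_n=k_{\varphi(z_n)}^{1/p}$, distinguishing the case $p>1$ (where $(f_n)$ is weakly null and $K$ is completely continuous) from the case $p=1$ (where one passes to a subsequence along which $(Kf_{n_k})$ is Cauchy in $H^\infty$ and works with the differences $(f_{n_k}-f_{n_m})/2$). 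Consequently the proof of the theorem reduces to a one-line citation of the two preceding propositions.
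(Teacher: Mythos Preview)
Your proposal is correct and matches the paper's approach exactly: the theorem is stated immediately after the sentence ``Combining Proposition~\ref{majoration2} and Proposition~\ref{minoration2} we obtain the following estimate,'' so the paper's proof is precisely the one-line citation you describe. Your additional remark on the compact case (where $M_\varphi(u)=0$ and $\|uC_\varphi\|_e=0$) is a welcome clarification that the paper leaves implicit.
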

Note that if $p>1$ one can replace the constant~$1/2$ by~$1.$


\section{$uC_\varphi\in B(H^\infty,H^q)$ for $\infty>q\geq 1$}

In this setting, boundedness of the weighted composition operator~$uC_\varphi$ is equivalent to saying that~$u$ belongs to~$H^q$, and~$uC_\varphi$ is compact if and only if $u=0$ or~$\vert E_\varphi\vert=0$ where~$E_\varphi=\{\zeta\in\T\mid\ \varphi^*(\zeta)\in\T\}$ is the extremal set of~$\varphi$ (see~\cite{CH}). We give here some estimates of the essential norm of~$uC_\varphi$ that appear in~\cite{GM} for the special case of composition operators:

\begin{thm}
Let $u\in H^q,$ with $\infty>q\geq1$ and $\varphi$ be an analytic self-map of~$\D.$ Then $\|uC_\varphi\|_e\approx\left(\int_{E_\varphi}\vert u(\zeta)\vert^q\ \ud m(\zeta)\right)^\frac{1}{q}.$ More precisely, 
\[\frac{1}{2}\left(\int_{E_\varphi}\vert u(\zeta)\vert^q\ \ud m(\zeta)\right)^\frac{1}{q}\leq\|uC_\varphi\|_e\leq2\left(\int_{E_\varphi}\vert u(\zeta)\vert^q\ \ud m(\zeta)\right)^\frac{1}{q}.\]
\end{thm}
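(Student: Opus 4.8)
The plan is to prove the two inequalities separately, in the same spirit as Propositions~\ref{majoration2} and~\ref{minoration2}: a Fej\'er--mean approximation for the upper bound, and a ``difference of two translates'' argument for the lower bound. Throughout I write $L=\big(\int_{E_\varphi}|u(\zeta)|^q\,\ud m(\zeta)\big)^{1/q}$; recall also that $\|uC_\varphi\|\le\|u\|_q<\infty$ since $u\in H^q$, so the essential norm makes sense.

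\emph{Upper estimate.} As in Section~2, let $K_N\colon H^\infty\to H^\infty$ be the convolution operator associated to the Fej\'er kernel $F_N$ and put $R_N=I-K_N$; then $\|K_N\|\le1$, $K_Nf$ is an analytic polynomial for every $f$, so $K_N$ has finite rank and $uC_\varphi K_N$ is compact from $H^\infty$ to $H^q$. Hence $\|uC_\varphi\|_e\le\|uC_\varphi R_N\|$ for every $N$. Fix $0<r<1$ and a unit vector $f\in H^\infty$, and set $h=(R_Nf)\circ\varphi\in H^\infty$. On $\{\zeta\in\T:|\varphi^*(\zeta)|\ge r\}$ I bound $|h^*(\zeta)|\le\|R_Nf\|_\infty\le2$, which contributes at most $2^q\int_{\{|\varphi^*|\ge r\}}|u|^q\,\ud m$; on $\{|\varphi^*(\zeta)|<r\}$ one has $h^*(\zeta)=R_Nf(\varphi^*(\zeta))$ with $\varphi^*(\zeta)\in r\D$ (because $R_Nf$ is analytic at $\varphi^*(\zeta)\in\D$), so Lemma~\ref{estimate2} applied with $\|f\|_1\le\|f\|_\infty\le1$ gives $|h^*(\zeta)|^q<\eps$ for all $N\ge N_0(r)$. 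Putting these together yields $\|uC_\varphi R_N\|^q\le\eps\|u\|_q^q+2^q\int_{\{|\varphi^*|\ge r\}}|u|^q\,\ud m$ for $N$ large enough; letting $\eps\to0^+$ and then $r\to1^-$, and using $\int_{\{|\varphi^*|\ge r\}}|u|^q\,\ud m\downarrow\int_{E_\varphi}|u|^q\,\ud m$, gives $\|uC_\varphi\|_e\le2L$.

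\emph{Lower estimate.} Take $f_n(z)=z^n$, a bounded sequence in $H^\infty$ with $\|uC_\varphi f_n\|_q^q=\int_\T|u|^q|\varphi^*|^{nq}\,\ud m\ge L^q$ for every $n$. Let $K\colon H^\infty\to H^q$ be compact; since $\overline{K(B_{H^\infty})}$ is compact in $H^q$, some subsequence $(Kf_{n_k})_k$ is Cauchy. Using $\|f_{n_j}-f_{n_k}\|_\infty\le2$ we get
\[\|uC_\varphi+K\|\ \ge\ \tfrac12\|uC_\varphi(f_{n_j}-f_{n_k})\|_q-\tfrac12\|Kf_{n_j}-Kf_{n_k}\|_q,\]
and the subtracted term tends to $0$. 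It remains to check that $\|uC_\varphi(f_{n_j}-f_{n_k})\|_q\to c_q^{1/q}L$ as $j,k\to\infty$ with $n_k-n_j\to\infty$, where $c_q=\int_\T|1-\zeta|^q\,\ud m(\zeta)$. Indeed, over $\T\setminus E_\varphi$ the integral $\int|u|^q|\varphi^{*n_j}-\varphi^{*n_k}|^q\,\ud m$ is dominated by $2^q\int_{\T\setminus E_\varphi}|u|^q|\varphi^*|^{q\min(n_j,n_k)}\,\ud m\to0$ by dominated convergence; over $E_\varphi$, where $|\varphi^*|=1$ a.e., it equals $\int_{E_\varphi}|u|^q|1-(\varphi^*)^N|^q\,\ud m$ with $N=|n_k-n_j|$, and transporting this integral by $\varphi^*$ turns it into $\int_\T|1-\zeta^N|^q\,\ud\nu(\zeta)$, where $\nu$ is the image of $|u|^q\,\ud m|_{E_\varphi}$ under $\varphi^*$, a finite measure with $\nu(\T)=L^q$. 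The classical fact that $\varphi^*$ carries Lebesgue--null subsets of $E_\varphi$ to Lebesgue--null subsets of $\T$ (see \cite{CH}) gives $\nu\ll m$, and then a Riemann--Lebesgue argument combined with uniform trigonometric approximation of $\zeta\mapsto|1-\zeta|^q$ shows $\int_\T|1-\zeta^N|^q\,\ud\nu\to c_q\,\nu(\T)=c_qL^q$. Since $c_q\ge1$ (the function $s\mapsto\int_\T|1-\zeta|^s\,\ud m$ is convex with a critical point at $s=0$, where it equals $1$), we conclude $\|uC_\varphi+K\|\ge\tfrac12L$ for every compact $K$, hence $\|uC_\varphi\|_e\ge\tfrac12L$.

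\emph{Main obstacle.} The delicate step is pinning down the limiting value of $\int_{E_\varphi}|u|^q|1-(\varphi^*)^N|^q\,\ud m$: the integrand cannot be bounded below pointwise---it is small wherever $(\varphi^*)^N$ is near $1$---so one genuinely needs both the absolute continuity of the pushforward of $m|_{E_\varphi}$ under $\varphi^*$ and an equidistribution/Riemann--Lebesgue argument. Everything else---the Fej\'er approximation, the boundary identities ($h^*=g\circ\varphi^*$ where $|\varphi^*|<1$ and $|h^*|\le\|g\|_\infty$ where $|\varphi^*|=1$), and the difference trick---runs exactly as in Sections~2 and~3.
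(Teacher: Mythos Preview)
Your argument is correct, but both halves take a different route from the paper.

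For the upper bound, the paper approximates $uC_\varphi$ by the compact operator $uC_{r\varphi}$ (compact because $\|r\varphi\|_\infty<1$ forces $E_{r\varphi}=\emptyset$) and controls $|f(\varphi(\zeta))-f(r\varphi(\zeta))|$ on $\{|\varphi^*|<1-\eps\}$ via the Schwarz--Pick inequality. Your Fej\'er approximation $uC_\varphi K_N$ works just as well and yields the same constant~$2$; it has the mild advantage of recycling Lemma~\ref{estimate2} rather than bringing in the pseudohyperbolic metric.

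For the lower bound the difference is more substantial. Both arguments start from $f_n=z^n$ and pass to a subsequence with $(Kf_{n_k})$ Cauchy, but the paper then \emph{decouples} the two terms by the radial dilation $g\mapsto g_r$: since $\|h_r\|_q\le\|h\|_q$ one has $\|u\varphi^{n_k}-u\varphi^{n_m}\|_q\ge\|(u\varphi^{n_k})_r\|_q-\|(u\varphi^{n_m})_r\|_q$, and with $r<1$ fixed the second term dies as $m\to\infty$ because $\|\varphi_r\|_\infty<1$, while the first can be made $\ge\|u\varphi^{n_k}\|_q-\eps$. This completely avoids the equidistribution issue. Your route instead computes $\lim_N\int_{E_\varphi}|u|^q|1-(\varphi^*)^N|^q\,\ud m$ head-on; it is more work but buys you the sharper constant $\tfrac12 c_q^{1/q}>\tfrac12$. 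One slip to fix: you write that ``$\varphi^*$ carries Lebesgue-null subsets of $E_\varphi$ to Lebesgue-null subsets of~$\T$'', but what $\nu\ll m$ actually requires is the \emph{opposite} direction, namely $m(A)=0\Rightarrow m\big((\varphi^*)^{-1}(A)\cap E_\varphi\big)=0$; this is the statement that the pullback measure $m_\varphi$ restricted to~$\T$ is absolutely continuous with respect to~$m$, which is indeed classical, so only the phrasing needs correcting.
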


We start with the upper estimate:

\begin{prop}
Let $u\in H^q,$ with $\infty>q\geq1$ and $\varphi$ be an analytic self-map of~$\D.$ Then \[\|uC_\varphi\|_e\leq2\bigg(\int_{E_\varphi}\vert u(\zeta)\vert^q\ \ud m(\zeta)\bigg)^\frac{1}{q}.\]
\end{prop}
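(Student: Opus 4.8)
The idea is to approximate $uC_\varphi$ by a sequence of compact operators, exactly as in the previous sections, and estimate the norm of the remainder. Let $K_N\colon H^\infty\to H^\infty$ be the convolution operator by the Fej\'er kernel $F_N$ of order $N$, and $R_N=I-K_N$. Each $K_N$ is compact (its range is finite-dimensional, consisting of polynomials of degree $<N$), and $\|K_N\|\le 1$ on $H^\infty$ since $\int_\T F_N\,\ud m=1$ and $F_N\ge 0$; hence $\|R_N\|\le 2$. Then $uC_\varphi=uC_\varphi K_N+uC_\varphi R_N$, and since $uC_\varphi K_N$ is compact, $\|uC_\varphi\|_e\le\|uC_\varphi R_N\|$ for every $N$. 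So it suffices to show that $\limsup_N\|uC_\varphi R_N\|\le 2\big(\int_{E_\varphi}|u|^q\,\ud m\big)^{1/q}$.

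To estimate $\|uC_\varphi R_N\|$, take a unit vector $f\in H^\infty$, so $\|f\|_\infty\le 1$. Then
\[
\|uC_\varphi R_N f\|_q^q=\int_\T |u(\zeta)|^q\,|R_Nf(\varphi^*(\zeta))|^q\,\ud m(\zeta)
=\int_{E_\varphi}+\int_{\T\setminus E_\varphi}.
\]
On $E_\varphi$ we have $\varphi^*(\zeta)\in\T$, and we bound $|R_Nf(\varphi^*(\zeta))|\le\|R_Nf\|_\infty\le 2\|f\|_\infty\le 2$, giving a contribution $\le 2^q\int_{E_\varphi}|u|^q\,\ud m$. The delicate part is the integral over $\T\setminus E_\varphi$, where $|\varphi^*(\zeta)|<1$: there I want to use that $R_Nf$ is small at points strictly inside $\D$. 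By Lemma~\ref{estimate2} (applied with $q=1$, since $H^\infty\subset H^1$ and $\|f\|_1\le\|f\|_\infty\le 1$), for fixed $\eps>0$ and any $r<1$ there is $N_0(r)$ such that $|R_Nf(w)|<\eps$ for all $|w|<r$ and all $N\ge N_0$. The issue is that $\T\setminus E_\varphi$ is not contained in any single $\{|\varphi^*|<r\}$; one must split once more: write $\T\setminus E_\varphi=\{|\varphi^*|<r\}\cup\{r\le|\varphi^*|<1\}$. On $\{|\varphi^*|<r\}$ the contribution is $\le\eps^q\int_\T|u|^q\,\ud m=\eps^q\|u\|_q^q$, which is harmless. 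On $\{r\le|\varphi^*|<1\}$ we only have the crude bound $|R_Nf|\le 2$, giving $\le 2^q\int_{\{r\le|\varphi^*|<1\}}|u|^q\,\ud m$; but $\bigcap_{r<1}\{\zeta:\ r\le|\varphi^*(\zeta)|<1\}=\emptyset$ up to a null set, so by dominated convergence (with dominating function $|u|^q\in L^1$) this tends to $0$ as $r\to 1$.

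**Main obstacle.** The one point requiring care is the order of the limits: $N_0$ depends on $r$, so we cannot send $N\to\infty$ and $r\to 1$ simultaneously. The clean way is: fix $\eps>0$; choose $r<1$ so that $2^q\int_{\{r\le|\varphi^*|<1\}}|u|^q\,\ud m<\eps^q$; then for all $N\ge N_0(r)$ and all unit $f\in H^\infty$,
\[
\|uC_\varphi R_N f\|_q^q\le 2^q\!\int_{E_\varphi}\!|u|^q\,\ud m+\eps^q\|u\|_q^q+\eps^q,
\]
whence $\limsup_N\|uC_\varphi R_N\|\le\big(2^q\int_{E_\varphi}|u|^q\,\ud m+\eps^q(\|u\|_q^q+1)\big)^{1/q}$. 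Letting $\eps\to 0$ gives $\|uC_\varphi\|_e\le 2\big(\int_{E_\varphi}|u|^q\,\ud m\big)^{1/q}$, as claimed. (When $q>1$ one expects the sharper constant $1$ rather than $2$, using that $K_N$ is then uniformly bounded and $R_Nf$ is weak-null, but $2$ is what the uniform bound $\|R_N\|\le 2$ yields in general.)
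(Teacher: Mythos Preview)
Your proof is correct, but it takes a genuinely different route from the paper's own argument. The paper does not use the Fej\'er-kernel decomposition here; instead it approximates $uC_\varphi$ by the compact operators $uC_{r\varphi}$ (compact because $\|r\varphi\|_\infty<1$ forces $|E_{r\varphi}|=0$), and estimates $\|uC_\varphi-uC_{r\varphi}\|$ by splitting $\T$ into $F_\eps=\{|\varphi^*|<1-\eps\}$ and its complement. On $F_\eps$ the paper invokes the Schwarz--Pick lemma to bound $|f(\varphi(\zeta))-f(r\varphi(\zeta))|\le 2\rho(\varphi(\zeta),r\varphi(\zeta))$, which can be made small uniformly in $f\in B_{H^\infty}$ by choosing $r$ close to $1$; on $\T\setminus F_\eps$ it uses the crude bound $2$ and lets $\eps\to 0$. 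Your approach reuses the machinery already built in Section~2 (Fej\'er averages, Lemma~\ref{estimate2}) and replaces the Schwarz--Pick step by the elementary fact that $R_Nf$ is uniformly small on compact subsets of $\D$; this has the advantage of unifying the argument with the $H^1\to H^q$ case and avoids the pseudohyperbolic distance entirely. The paper's dilation trick, on the other hand, is more self-contained for this section and sidesteps the order-of-limits issue you had to manage. Both routes lead to the same constant $2$, coming in your case from $\|R_N\|_{H^\infty}\le 2$ and in the paper's case from $|f(z)-f(w)|\le 2\|f\|_\infty$.
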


\begin{proof}Take $0<r<1.$ Since $\|r\varphi\|_\infty\leq r<1$, the set $E_{r\varphi}$ is empty and therefore the operator~$uC_{r\varphi}$ is compact. Thus~$\|uC_\varphi\|_e\leq\|uC_\varphi-uC_{r\varphi}\|.$ But
\begin{equation}\label{eq5}
\|uC_\varphi-uC_{r\varphi}\|^q=\sup_{\|f\|_\infty\leq1}\int_\T\vert u(\zeta)\vert^q\big\vert f\big(\varphi(\zeta)\big)-f\big(r\varphi(\zeta)\big)\big\vert^q\ \ud m(\zeta).
\end{equation}
If $|E_\varphi|=1$ then the integral in~\eqref{eq5} coincides with \[\int_{E_\varphi}\vert u(\zeta)\vert^q\big\vert f\big(\varphi(\zeta)\big)-f\big(r\varphi(\zeta)\big)\big\vert^q\ \ud m(\zeta)\] which is less than $2^q\int_{E_\varphi}\vert u(\zeta)\vert^q\ \ud m(\zeta).$  If $|E_\varphi|<1$ we let $F_\eps=\{\zeta\in\T\mid\ \vert\varphi^*(\zeta)\vert<1-\eps\}$ for $\eps>0,$ which is a nonempty set for $\eps$ sufficiently small. (Let us mention here that an element $\zeta\in\T$ needs not to satisfy neither $\zeta\in E_\varphi$ nor $\zeta\in\bigcup_{\eps>0} F_\eps.$ It can happen that the radial limit~$\varphi^*(\zeta)$ does not exist, but this happens only for~$\zeta$ belonging to a set of measure zero). We will use the pseudohyperbolic distance~$\rho$ defined for~$z$ and~$w$ in the unit disk by~$\rho(z,w)=\vert z-w\vert/\vert 1-\bar{w}z\vert.$ The Pick-Schwarz's theorem ensures that~$\rho\big(f(z),f(w)\big)\leq\rho(z,w)$ for every function~$f\in B_{H^\infty}.$ As a consequence the inequality $\vert f(z)-f(w)\vert\leq2\rho(z,w)$ holds for every $w$ and~$z$ in~$\D.$\\
If $\zeta$ is an element of~$F_\eps$ then\[\rho\big(\varphi(\zeta),r\varphi(\zeta)\big)=\frac{(1-r)\vert\varphi(\zeta)\vert}{1-r\vert\varphi(\zeta)\vert^2}\leq\frac{1-r}{1-r(1-\eps)^2}.\]
One can choose $0<r<1$ satisfying~$\sup_{F_\eps}\rho\big(\varphi(\zeta),r\varphi(\zeta)\big)<\eps/2,$ and therefore 
\[\big\vert f\big(\varphi(\zeta)\big)-f\big(r\varphi(\zeta)\big)\big\vert\leq2\sup_{F_\eps}\rho\big(\varphi(\zeta),r\varphi(\zeta)\big)\leq\eps\]for all $\zeta\in F_\eps$ and for every function~$f$ in the closed unit ball of~$H^\infty.$ It follows from these estimates and~\eqref{eq5} that
\begin{align*}
\|uC_\varphi-uC_{r\varphi}\|^q&\leq\sup_{\|f\|_\infty\leq1}\bigg(\int_{F_\eps}\vert u(\zeta)\vert^q\eps^q\ \ud m(\zeta)+\int_{\T\backslash F_\eps}2^q\vert u(\zeta)\vert^q\ \ud m(\zeta)\bigg)\\
                              &\leq\eps^q\|u\|_q^q+2^q\int_{\T\backslash F_\eps}\vert u(\zeta)\vert^q\ \ud m(\zeta).
\end{align*}
Make $\eps$ tend to zero to deduce the upper estimate.
\end{proof}

Let us turn to the lower estimate:

\begin{prop}
Suppose that $\varphi$ is an analytic self-map of~$\D$ and $u\in H^q$ with $\infty>q\geq1.$ Then \[\|uC_\varphi\|_e\geq\frac{1}{2}
\left(\int_{E_\varphi}\vert u(\zeta)\vert^q\ \ud m(\zeta)\right)^\frac{1}{q}.\]
\end{prop}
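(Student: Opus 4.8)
The plan is to use a test-function argument analogous to the lower estimates already carried out in this paper, but now targeting the set $E_\varphi$ directly rather than the behaviour of $\varphi$ near $\T$. The natural test functions are translates $f_n(z) = z^n$, or more flexibly trigonometric monomials, composed appropriately: for a bounded operator $K$ from $H^\infty$ to $H^q$ we have $\|uC_\varphi + K\| \geq \|(uC_\varphi + K)f_n\|_q$ for any unit vector $f_n \in H^\infty$, and the point is to choose a sequence $(f_n)$ of unimodular functions for which $\|K f_n\|_q \to 0$ while $\|uC_\varphi f_n\|_q$ stays close to $\left(\int_{E_\varphi}|u|^q\,\ud m\right)^{1/q}$. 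On $E_\varphi$ we have $|\varphi^*(\zeta)| = 1$, so $|f_n(\varphi^*(\zeta))| = 1$ whenever $f_n$ is inner (e.g.\ a finite Blaschke product or a monomial $z^n$), which forces $\int_{E_\varphi}|u(\zeta)|^q|f_n(\varphi^*(\zeta))|^q\,\ud m(\zeta) = \int_{E_\varphi}|u|^q\,\ud m$ exactly. So the outer integral is automatically the right size on $E_\varphi$; the work is to control the contribution of $\T\setminus E_\varphi$ and the compact perturbation $K$.

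First I would fix a compact operator $K : H^\infty \to H^q$ and set $g_n(z) = z^n$. These are unit vectors in $H^\infty$, and they converge to $0$ uniformly on compact subsets of $\D$, hence weak-star in $H^\infty$. Since $H^\infty$ is not reflexive we cannot immediately conclude $\|Kg_n\|_q \to 0$; as in the proof of Proposition~\ref{minoration2}, I would instead pass to a subsequence along which $(Kg_{n_k})_k$ converges in $H^q$, so it is Cauchy, and then run the argument with differences $(g_{n_k} - g_{n_m})/2$, which are of norm at most $1$ in $H^\infty$. Then
\[
\|uC_\varphi + K\| \geq \tfrac12\|uC_\varphi(g_{n_k} - g_{n_m})\|_q - \tfrac{\eps}{2}
\]
for $k,m \geq N(\eps)$, and I expand the first term as $\tfrac12\left(\int_\T |u(\zeta)|^q|\varphi^*(\zeta)^{n_k} - \varphi^*(\zeta)^{n_m}|^q\,\ud m(\zeta)\right)^{1/q}$.

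The key point is then the following: on $\T\setminus E_\varphi$ (up to the measure-zero set where $\varphi^*$ fails to exist) we have $|\varphi^*(\zeta)| < 1$, so $\varphi^*(\zeta)^{n} \to 0$ pointwise a.e.\ on $\T\setminus E_\varphi$ as $n\to\infty$; since $|u|^q \in L^1(\T)$, dominated convergence gives $\int_{\T\setminus E_\varphi}|u(\zeta)|^q|\varphi^*(\zeta)^{n_k} - \varphi^*(\zeta)^{n_m}|^q\,\ud m(\zeta) \to 0$ as $k,m\to\infty$. On $E_\varphi$ the integrand is $|u(\zeta)|^q|\varphi^*(\zeta)^{n_k} - \varphi^*(\zeta)^{n_m}|^q$ with $|\varphi^*| = 1$; here I would further restrict the subsequence so that $\varphi^*(\zeta)^{n_k}$ behaves well. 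One clean way: replace the monomials by functions of the form $f_n(z) = z^{n}$ chosen so that, after a diagonal/subsequence extraction, $\varphi^*(\zeta)^{n_k}$ converges a.e.\ on $E_\varphi$ — but this need not hold for a fixed $\varphi$. A more robust route is to note that for the difference we only need a \emph{lower} bound infinitely often: pick the indices $n_k$ so sparse that $\int_{E_\varphi}|\varphi^*(\zeta)^{n_k} - \varphi^*(\zeta)^{n_m}|^q|u(\zeta)|^q\,\ud m$ is, for suitable pairs, at least $(2^q - \eps)\int_{E_\varphi}|u|^q\,\ud m$; this can be arranged because $\varphi^*$ is non-constant and one can use a weak-mixing / equidistribution argument, or simply the observation that $\int_{E_\varphi}|\varphi^{*n} - \varphi^{*m}|^q|u|^q\,\ud m \to 2^q\int_{E_\varphi}|u|^q\,\ud m$ in Cesàro mean as $|n-m|\to\infty$ (by the fact that $\varphi^{*n}\overline{\varphi^{*m}}$ oscillates). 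Combining, for suitable $k,m$ we get $\|uC_\varphi(g_{n_k}-g_{n_m})\|_q^q \geq (2^q-\eps)\int_{E_\varphi}|u|^q\,\ud m - \eps$, whence
\[
\|uC_\varphi + K\| \geq \tfrac12\left(2^q\int_{E_\varphi}|u(\zeta)|^q\,\ud m(\zeta)\right)^{1/q} - O(\eps^{1/q}) = \left(\int_{E_\varphi}|u|^q\,\ud m\right)^{1/q} - O(\eps^{1/q}),
\]
which is in fact \emph{better} than the claimed $\tfrac12$ and certainly implies it; letting $\eps\to0$ and taking the infimum over $K$ finishes the proof.

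The main obstacle is precisely the step controlling $\int_{E_\varphi}|\varphi^{*n_k} - \varphi^{*n_m}|^q|u|^q\,\ud m$ from below along a good subsequence: unlike the $H^\infty$-target case where point evaluations did the job, here we integrate over a possibly complicated set $E_\varphi$ and need the inner functions $\varphi^{*n}$ to decorrelate. If one is content with the constant $\tfrac12$ as stated, there is a cheaper fallback avoiding the oscillation argument entirely: take a single weak-star null sequence $g_n = z^n$, use $\|uC_\varphi + K\| \geq \|uC_\varphi g_{n_k} - K g_{n_k}\|_q$ along the Cauchy subsequence is not quite enough since $Kg_{n_k}$ need not vanish, but $\|(uC_\varphi+K)(g_{n_k}-g_{n_m})/2\|_q$ combined with only the $E_\varphi$ lower bound $\int_{E_\varphi}|u|^q|\varphi^{*n_k}-\varphi^{*n_m}|^q\,\ud m \geq$ (something bounded below by a positive multiple of $\int_{E_\varphi}|u|^q\,\ud m$ — using, e.g., that $|\varphi^{*n_k}-\varphi^{*n_m}|$ cannot be small on a large subset without forcing $\varphi^{*n_k-n_m}\approx 1$, which a Fatou/normal-families argument excludes for a non-constant $\varphi$ along a sparse enough sequence) yields the factor $\tfrac12$ comfortably. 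I would present the oscillation/Cesàro version as the clean argument and remark that it even improves the constant.
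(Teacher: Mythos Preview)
Your overall strategy---test against $g_n=z^n$, extract a subsequence on which $Kg_{n_k}$ is Cauchy, and work with $(g_{n_k}-g_{n_m})/2$---is exactly the paper's. The gap is at the step where you try to bound $\int_{E_\varphi}|u|^q|\varphi^{*n_k}-\varphi^{*n_m}|^q\,\ud m$ from below. Your main claim, that this integral tends to $2^q\int_{E_\varphi}|u|^q\,\ud m$ in Ces\`aro mean, is false: take $\varphi(z)=z$, $u\equiv1$, $q=2$, so $E_\varphi=\T$ and $\int_\T|\zeta^n-\zeta^m|^2\,\ud m=2$ for every $n\neq m$, not $4$. More generally $|\varphi^{*n}-\varphi^{*m}|=2|\sin\tfrac{(n-m)\theta(\zeta)}{2}|$ on $E_\varphi$, and no averaging will push $|\sin|$ up to $1$. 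Your ``cheaper fallback'' is not a proof either: you assert that some positive multiple of $\int_{E_\varphi}|u|^q\,\ud m$ is available and that it ``yields the factor $\tfrac12$ comfortably'', but you would need the multiple to be at least $1$, and the Fatou/normal-families remark does not establish that. There is also a sequencing issue you gloss over: once the Cauchy subsequence for $K$ is fixed, you must produce the lower bound along \emph{that} subsequence, and your sparseness/equidistribution heuristics are stated for freely chosen indices.

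The paper sidesteps the whole $E_\varphi$-oscillation problem with a device you are missing: radial dilation. Writing $g_r(z)=g(rz)$ and using $\|g_r\|_q\le\|g\|_q$, one has
\[
\|u(\varphi^{n_k}-\varphi^{n_m})\|_q\ \ge\ \|(u\varphi^{n_k})_r-(u\varphi^{n_m})_r\|_q\ \ge\ \|(u\varphi^{n_k})_r\|_q-\|(u\varphi^{n_m})_r\|_q.
\]
For $k$ fixed one first chooses $r<1$ with $\|(u\varphi^{n_k})_r\|_q\ge\|u\varphi^{n_k}\|_q-\eps$; then, with this $r$ fixed, $\|(u\varphi^{n_m})_r\|_q\le\|u\|_q\|\varphi_r\|_\infty^{n_m}\to0$ as $m\to\infty$ because $\|\varphi_r\|_\infty<1$. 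This decouples the two terms completely and gives $\|uC_\varphi+K\|\ge\tfrac12\|u\varphi^{n_k}\|_q-\eps$; letting $k\to\infty$ and using dominated convergence on $\T$ finishes. No control of $|\varphi^{*n_k}-\varphi^{*n_m}|$ on $E_\varphi$ is ever needed.
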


\begin{proof}
Take a compact operator $K\in B(H^\infty,H^q).$ Since the sequence~$(z^n)_{n\in\N}$ is bounded in~$H^\infty$, there exists an increasing sequence of integers~$(n_k)_{k\geq0}$ such that~$\left(K(z^{n_k})\right)_{k\geq0}$ converges in~$H^q.$ For any $\eps>0$ one can find~$N\in\N$ such that for every~$k,m\geq N$ we have~\mbox{$\|Kz^{n_k}-Kz^{n_m}\|_q<\eps.$} If $0<r<1,$ we let $g_r(z)=g(rz)$ for a function~$g$ defined on~$\D.$ Take~$k\geq N.$ Then there exists~$0<r<1$ such that \[\|\left(u\varphi^{n_k}\right)_r\|_q\geq\|u\varphi^{n_k}\|_q-\eps.\]
For all $m\geq N$ we have 
\begin{align*}
\|uC_\varphi+K\| &\geq\left\|(uC_\varphi+K)\left(\frac{z^{n_k}-z^{n_m}}{2}\right)\right\|_q\\
&\geq\frac{1}{2}\left\|u\left(\varphi^{n_k}-\varphi^{n_m}\right)\right\|_q-\frac{\eps}{2}\\           &\geq\frac{1}{2}\left\|\left(u\varphi^{n_k}\right)_r-\left(u\varphi^{n_m}\right)_r\right\|_q-\frac{\eps}{2}\\
&\geq\frac{1}{2}\left(\left\|\left(u\varphi^{n_k}\right)_r\right\|_q-\left\|\left(u\varphi^{n_m}\right)_r\right\|_q\right)-\frac{\eps}{2}\\
&\geq\frac{1}{2}\left(\left\|u\varphi^{n_k}\right\|_q-\left\|\left(u\varphi^{n_m}\right)_r\right\|_q\right)-\eps.\\
\end{align*}
Let us make $m$ tend to infinity, keeping in mind that $0<r<1$ and $\|\varphi_r\|_\infty<1$: \[\|\left(u\varphi^{n_m}\right)_r\|_q\leq \|u\|_q\|(\varphi_r)^{n_m}\|_\infty\leq \|u\|_q\|\varphi_r\|_\infty^{n_m}\mathop{\longrightarrow}_m0.\] Thus $\|uC_\varphi+K\|\geq(1/2)\|u\varphi^{n_k}\|_q-\eps$ for all~$k\geq N.$ We conclude noticing that
\[\|u\varphi^{n_k}\|_q=\left(\int_\T\left|u(\zeta)\varphi(\zeta)^{n_k}\right|^q\ \ud m(\zeta)\right)^{\frac{1}{q}}\mathop{\longrightarrow}_k\left(\int_{E_\varphi}|u(\zeta)|^q\ \ud m(\zeta)\right)^{\frac{1}{q}}.\]
\end{proof}


\section{$uC_\varphi\in B(H^p,H^q)$ for $\infty>p>q\geq1$}

In~\cite{GM}, the authors give an estimate of the essential norm of a composition operator between~$H^p$ and~$H^q$ for~$1<q<p<\infty.$ The proof makes use of the Riesz projection from~$L^q$ onto~$H^q$, which is a bounded operator for~$1<q<\infty.$ Since it is not bounded from~$L^1$ to~$H^1$ ($H^1$ is not even complemented in~$L^1$) there is no way to use a similar argument. So we need a different approach to get some estimates for~$q=1$. A solution is to make use of Carleson measures. First, we give a characterization of the boundedness of~$uC_\varphi$ in terms of a Carleson measure. In the case where~$p>q$, Carleson measures on~$\overline{\D}$ are characterized in~\cite{BJ}. Denote by~$\Gamma(\zeta)$ the Stolz domain generated by~$\zeta\in\T,$ \emph{i.e.} the interior of the convex hull of the set~$\{\zeta\}\cup(\alpha\D)$, where~$0<\alpha<1$ is arbitrary but fixed.

   \begin{thm}[see {\cite[Theorem~$2.2$]{BJ}}]\label{p>q Carleson measure}
   Let $\mu$ be a measure on $\overline{\D}$, $1\leq q<p<\infty$ and $s=p/(p-q).$ Then~$\mu$ is a~$(p,q)$-Carleson measure on~$\overline{\D}$ if and only if~$\zeta\mapsto\int_{\Gamma(\zeta)}\dis\frac{\ud\mu(z)}{1-|z|^2}$ belongs to~$L^s(\T)$ and~$\mu_\T=F\ud m$ for a function~$F\in L^s(\T).$
   \end{thm}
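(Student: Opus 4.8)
Since every $f\in H^p\subset H^1$ has a boundary function $f^{*}$, I would split $\int_{\overline{\D}}|f|^q\,\ud\mu=\int_{\D}|f|^q\,\ud\mu_\D+\int_{\T}|f^{*}|^q\,\ud\mu_\T$ and treat the interior part $\mu_\D$ and the boundary part $\mu_\T$ separately; set $s=p/(p-q)=(p/q)'$, so that $q/p$ and $s$ are conjugate exponents and $p>q\geq1$ forces $p>1$. The boundary part is the routine one. If $\mu_\T=F\,\ud m$, then H\"older's inequality with exponents $p/q$ and $s$ gives $\int_\T|f^{*}|^q F\,\ud m\leq\|f\|_p^q\|F\|_{s}$, which is the contribution of $\mu_\T$ to the embedding bound; conversely, if $\mu_\T$ had positive mass on a set $E$ of $m$-measure zero, one may assume $E$ compact, cover it by open sets $U_N\supset E$ with $m(U_N)\leq N^{-p}$, and test the embedding against continuous outer functions $f_N$ with $|f_N|\equiv N$ on a neighbourhood of $E$ and small off $U_N$: then $\|f_N\|_p\lesssim1$ while $\|J_\mu f_N\|_{L^q(\mu)}^q\geq N^q\mu_\T(E)\to\infty$, a contradiction, so $\mu_\T=F\,\ud m$; choosing $f$ with $|f^{*}|$ adjusted to saturate H\"older against $L^{s}$ then forces $F\in L^{s}(\T)$.

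For the interior part, sufficiency rests on the elementary observation that $z\in\Gamma_\alpha(\zeta)$ for every $\zeta$ in an arc $I_z$ of length $\asymp1-|z|$ about $z/|z|$ (with $\alpha$ a fixed aperture), so $|f(z)|\leq\inf_{\zeta\in I_z}(N_\alpha f^{*})(\zeta)$, where $N_\alpha f^{*}(\zeta)=\sup_{w\in\Gamma_\alpha(\zeta)}|f(w)|$. Raising to the power $q$, dominating the infimum by the $I_z$-average of $(N_\alpha f^{*})^q$, and applying Fubini gives
\[\int_{\D}|f|^q\,\ud\mu_\D\leq\int_{\T}(N_\alpha f^{*})^q(\zeta)\Bigl(\int_{\{z\,:\,\zeta\in I_z\}}\frac{\ud\mu_\D(z)}{|I_z|}\Bigr)\,\ud m(\zeta).\]
The region $\{z:\zeta\in I_z\}$ is a Stolz-type domain at $\zeta$ on which $|I_z|\asymp1-|z|\asymp1-|z|^2$, so the inner integral is comparable to $\Phi_\mu(\zeta):=\int_{\Gamma(\zeta)}\ud\mu(z)/(1-|z|^2)$ (changing one aperture for another alters the $L^{s}$-norm only by a constant, a standard fact about such balayage operators). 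H\"older with exponents $p/q$ and $s$, together with the maximal theorem $\|N_\alpha f^{*}\|_p\asymp\|f\|_p$, then yields $\int_\D|f|^q\,\ud\mu_\D\leq C\|\Phi_\mu\|_{s}\|f\|_p^q$; combined with the boundary estimate this shows the two conditions are sufficient, with $\|J_\mu\|^q\lesssim\|\Phi_\mu\|_{s}+\|F\|_{s}$.

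The necessity of $\Phi_\mu\in L^{s}$ is, I expect, the genuine obstacle, and I would obtain it by Luecking's randomisation technique. Fix a Whitney decomposition $\{Q_j\}$ of $\D$ with centres $z_j$, and for nonnegative coefficients $(c_j)$ and a Rademacher sequence $(\eps_j)$ form the random test functions $f_\omega=\sum_j\eps_j(\omega)c_j k_{z_j}^{1/p}$, each summand being a unit vector of $H^p$. Applying Khinchine's inequality inside both $\int(\cdot)\,\ud\mu$ and $\|\cdot\|_p$ (and Jensen, using $q<p$) replaces $|f_\omega|$ by the square function $\bigl(\sum_j c_j^2|k_{z_j}|^{2/p}\bigr)^{1/2}$; using $|k_{z_j}|\asymp(1-|z_j|^2)^{-1}$ on $Q_j$ and $|Q_j|\asymp(1-|z_j|^2)^2$, and keeping on the left only the diagonal contribution of each box, the embedding estimate $\mathbb{E}\int|f_\omega|^q\,\ud\mu\lesssim\gamma_1^q\,\mathbb{E}\|f_\omega\|_p^q$ becomes a purely discrete inequality, valid for all $(c_j)\geq0$, whose left-hand side is a weighted $\ell^{q}$-type sum of the numbers $\mu_\D(Q_j)$ and whose right-hand side is the $L^p(\T)$-norm of an area/tent function built from $(c_j)$. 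Dualising this inequality---i.e.\ invoking the duality between the relevant $L^p$- and $L^{s}$-tent (Coifman--Meyer--Stein) spaces in discretised form---characterises the admissible families $(\mu_\D(Q_j))$ precisely as the discrete model of $\Phi_\mu\in L^{s}(\T)$, and undoing the discretisation (each $\zeta\in\T$ lies in the shadows of exactly the boxes that tile $\Gamma(\zeta)$) recovers $\|\Phi_\mu\|_{s}\lesssim\|J_\mu\|$, the companion condition $F\in L^{s}$ having been established in the first step. The delicate points are exactly this tent-space duality, the lower bound for the square function on each Whitney box, and the weight bookkeeping in passing between the discrete and continuous formulations.
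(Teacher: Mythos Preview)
The paper does not prove this theorem: it is quoted verbatim from Blasco--Jarchow \cite[Theorem~2.2]{BJ} and used as a black box (to derive Corollary~\ref{contcarleson} and Lemma~\ref{Carleson2}), so there is no ``paper's own proof'' to compare your proposal against.

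That said, your outline is essentially the standard route to this result and is sound in broad strokes. The sufficiency argument (H\"older on the boundary, the Fubini/balayage estimate $\int_\D|f|^q\,\ud\mu_\D\lesssim\int_\T (N_\alpha f)^q\,\Phi_\mu\,\ud m$ followed by H\"older and the nontangential maximal theorem) is exactly what the present paper reproduces, in the special case it needs, inside the proof of Lemma~\ref{Carleson2}. For necessity, the absolute continuity of $\mu_\T$ and $F\in L^s$ are handled correctly; the Luecking randomisation you invoke for $\Phi_\mu\in L^s$ is one legitimate approach, though Blasco--Jarchow themselves argue more directly via tent-space/duality considerations without the Rademacher detour. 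The only places where your sketch would need genuine work to become a proof are the ones you flag yourself: the tent-space duality step and the off-diagonal control of $\sum_j c_j^2|k_{z_j}|^{2/p}$ in the $H^p$-norm estimate (the diagonal lower bound on the $\mu$-side is easy, but the upper bound on $\mathbb{E}\|f_\omega\|_p^q$ requires care when $p\neq 2$).
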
  
   
   This leads to a characterization of the continuity of a weighted composition operator between~$H^p$ and~$H^q$:

\begin{cor}\label{contcarleson}
Let $u$ be an analytic function on~$\D$ and $\varphi$ an analytic self-map of~$\D.$ For $1\leq q<p<\infty,$ the weighted composition operator $uC_\varphi:H^p\rightarrow H^q$ is bounded if and only if $G:\zeta\in\T\mapsto G(\zeta)=\int_{\Gamma(\zeta)}\frac{\ud \mu_\varphi(z)}{1-|z|^2}$ belongs to~$L^s(\T)$ for~$s=p/(p-q)$ and~$\mu_{\varphi_{|_\T}}=F\ud m$ for a certain~$F\in L^s(\T)$, where~$\ud\mu=|u|^q\ud m$ and~$\mu_\varphi=\mu\circ\varphi^{-1}$ is the pullback measure of~$\mu$ by~$\varphi.$
\end{cor}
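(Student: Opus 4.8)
The plan is to reduce Corollary~\ref{contcarleson} to Theorem~\ref{p>q Carleson measure} by the same bridge between composition operators and pullback measures that was recalled in the paragraph preceding Lemma~\ref{carleson}. Write $\ud\mu=|u|^q\,\ud m$; this is a finite positive Borel measure on $\overline{\D}$ since $u\in H^q$ (necessary for any kind of boundedness, by testing on constants). Let $\mu_\varphi=\mu\circ\varphi^{-1}$ be its pullback under the boundary map $\varphi^*$, so that for every $f$ analytic on $\D$ for which the boundary function $(f\circ\varphi)^*$ is defined a.e.\ we have the change-of-variables identity
\[
\|uC_\varphi f\|_q^q=\int_\T|u(\zeta)|^q\,|f(\varphi^*(\zeta))|^q\,\ud m(\zeta)=\int_{\overline{\D}}|f(z)|^q\,\ud\mu_\varphi(z)=\|J_{\mu_\varphi}f\|_{L^q(\mu_\varphi)}^q.
\]
Thus $uC_\varphi:H^p\to H^q$ is bounded if and only if the embedding $J_{\mu_\varphi}:H^p\to L^q(\mu_\varphi)$ is bounded, i.e.\ if and only if $\mu_\varphi$ is a $(p,q)$-Carleson measure. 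Then Theorem~\ref{p>q Carleson measure} applied to $\mu_\varphi$ (with $1\le q<p<\infty$ and $s=p/(p-q)$) gives exactly the stated condition: $\zeta\mapsto\int_{\Gamma(\zeta)}\ud\mu_\varphi(z)/(1-|z|^2)$ lies in $L^s(\T)$ and $(\mu_\varphi)_\T=F\,\ud m$ for some $F\in L^s(\T)$.

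The first step I would carry out carefully is the justification of the change-of-variables identity above, in particular that it holds \emph{as an identity including the case where either side is infinite}, so that boundedness of one operator really is equivalent to boundedness of the other. The subtlety is that $f\circ\varphi$ need not a priori be in $H^q$, so $(f\circ\varphi)^*$ must be understood correctly: one uses that $\varphi^*$ exists a.e.\ on $\T$, that $f$ has radial limits a.e., and that $\varphi^*$ does not carry a set of positive measure into the (measure-zero, by the F.\ and M.\ Riesz-type argument) bad set of $f$; equivalently one works with the pullback measure $\mu_\varphi$ on $\overline{\D}$ directly, and notes that polynomials are dense in $H^p$ and that for polynomials the identity is elementary. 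The density argument then upgrades the identity from polynomials to all of $H^p$, and simultaneously shows that the norm of $J_{\mu_\varphi}$ equals (the $q$-th root of) the best Carleson constant and equals $\|uC_\varphi\|$.

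The second step is bookkeeping: I would record that the constants are comparable in the sense used throughout the paper, so that the statement "$uC_\varphi$ bounded $\iff$ $G\in L^s$ and $\mu_{\varphi|_\T}=F\,\ud m$ with $F\in L^s$" is the literal translation of Theorem~\ref{p>q Carleson measure}, with $G(\zeta)=\int_{\Gamma(\zeta)}\ud\mu_\varphi(z)/(1-|z|^2)$ and with the Stolz aperture $\alpha$ fixed as in the statement preceding Theorem~\ref{p>q Carleson measure}. I would also remark that $G$ and $F$ are well defined precisely because $\mu_\varphi$ is a finite measure on $\overline{\D}$, and that the decomposition $\mu_\varphi=(\mu_\varphi)_\D+(\mu_\varphi)_\T$ into interior and boundary parts is the one appearing in Theorem~\ref{p>q Carleson measure}.

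The main obstacle is the measure-theoretic care in Step~1: making the pullback measure $\mu_\varphi$ and the boundary-value identity rigorous when $f\circ\varphi$ is merely analytic and $\varphi^*$ is an arbitrary measurable map $\T\to\overline{\D}$. Once that identity is in hand, the corollary is immediate from Theorem~\ref{p>q Carleson measure}; everything else is a direct substitution. (In fact the same reduction, with Theorem~\ref{p<q Carleson measure} in place of Theorem~\ref{p>q Carleson measure}, reproves the boundedness half of Theorem~\ref{Hp-Hq} in the range $1\le p<q<\infty$, which is a useful sanity check on the argument.)
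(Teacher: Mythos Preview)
Your proposal is correct and follows essentially the same route as the paper: reduce boundedness of $uC_\varphi$ to the statement that $\mu_\varphi$ is a $(p,q)$-Carleson measure via the change-of-variables identity, then invoke Theorem~\ref{p>q Carleson measure}. The paper's proof is in fact terser than yours---it asserts the change of variables in one line without the measure-theoretic caveats you raise---so your additional care in Step~1 only strengthens the argument.
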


\begin{proof}$uC_\varphi$ is a bounded operator if and only if there exists~$\gamma>0$ such that for any~$f\in H^p,\ \int_\T|u(\zeta)|^q\left|f\circ\varphi(\zeta)\right|^q\ \ud m(\zeta)\leq \gamma\|f\|_p^q,$ which is equivalent (via a change of variables) to~$\int_{\overline{\D}}\left|f(z)\right|^q\ \ud \mu_\varphi(z)\leq \gamma\|f\|_p^q$ for every~$f\in H^p.$ This exactly means that~$\mu_\varphi$ is a~$(p,q)$-Carleson measure. This is equivalent by Theorem~\ref{p>q Carleson measure} to the condition announced.
\end{proof}

If $f\in H^p,$ the Hardy-Littlewood maximal nontangential function $Mf$ is defined by $Mf(\zeta)=\sup_{z\in\Gamma(\zeta)}|f(z)|$ for $\zeta\in\T.$ For $1<p<\infty,$ $M$ is a bounded operator from $H^p$ to $L^p$ and we will denote its norm by~$\|M\|_p.$ The following lemma is the analogue version of Lemma~\ref{carleson} for the case $p>q.$ 

\begin{lem}\label{Carleson2}
Let $\mu$ be a positive Borel measure on~$\overline{\D}.$ Assume that~$\mu$ is a~$(p,q)$-Carleson measure for $1\leq q < p < \infty.$ Let $0<r<1$ and~$\mu_r:=\mu_{|_{\overline{\D}\backslash r\D}}$. Then~$\mu_r$ is a~$(p,q)$-Carleson measure, and there exists a positive constant~$\gamma$ such that for every~$f\in H^p,$ \[\int_{\overline{\D}}\left|f(z)\right|^q\ \ud\mu_r(z)\leq (\|F\|_s+\gamma\|M\|_p^q\|\widetilde{G_r}\|_s)\|f\|_p^q\]
where $\ud\mu_\T=F\ud m$ and $\widetilde{G_r}(\zeta)=\int_{\Gamma(\zeta)}\frac{\ud \mu_r(z)}{1-|z|^2}.$ In addition, $\|\widetilde{G_r}\|_s\rightarrow0$ as \mbox{$r\rightarrow1.$} 
\end{lem}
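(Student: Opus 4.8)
The goal is to control the embedding $H^p\hookrightarrow L^q(\mu_r)$ by splitting $\mu_r=(\mu_r)_\D+(\mu_r)_\T$ and treating the interior and boundary parts separately, using Theorem~\ref{p>q Carleson measure} as the structural input. For the boundary part, note $(\mu_r)_\T$ has density $F$ on a set of $m$-measure that we may as well keep as all of $\T$ (truncating at radius $r$ only removes mass from the interior, since points of $\T$ are not in $r\D$); thus $\int_{\overline{\D}}|f|^q\,\ud(\mu_r)_\T=\int_\T|f^*|^q F\,\ud m\le\|F\|_s\,\||f^*|^q\|_{s'}=\|F\|_s\|f\|_p^q$ by Hölder with $s'=s/(s-1)=p/q$. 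For the interior part, the standard tent-space/maximal-function estimate gives, for any positive measure $\nu$ on $\D$,
\[
\int_{\D}|f(z)|^q\,\ud\nu(z)\le C_q\int_\T \big(Mf(\zeta)\big)^q\,\widetilde{H_\nu}(\zeta)\,\ud m(\zeta),
\qquad \widetilde{H_\nu}(\zeta)=\int_{\Gamma(\zeta)}\frac{\ud\nu(z)}{1-|z|^2},
\]
which one proves by writing $|f(z)|\le Mf(\zeta)$ whenever $z\in\Gamma(\zeta)$, i.e. whenever $\zeta$ lies in the arc $I_z$ of those boundary points whose Stolz domain contains $z$, with $|I_z|\approx 1-|z|$, and then applying Fubini. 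Applying Hölder with exponents $p/q$ and $s$ to the right side yields $\int_\D|f|^q\,\ud(\mu_r)_\D\le C_q\|M\|_p^q\,\|\widetilde{G_r}\|_s\,\|f\|_p^q$, where $\widetilde{G_r}=\widetilde{H_{(\mu_r)_\D}}$ agrees with the $\widetilde{G_r}$ in the statement since $(\mu_r)_\T$ contributes nothing to a cone integral of $\ud\nu/(1-|z|^2)$ in the relevant way — one should be slightly careful here and in fact $\widetilde{G_r}(\zeta)=\int_{\Gamma(\zeta)}\ud\mu_r(z)/(1-|z|^2)$ is finite $m$-a.e. precisely because $\mu_r$ is $(p,q)$-Carleson, by Theorem~\ref{p>q Carleson measure}. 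Adding the two bounds gives the displayed inequality with constant $\|F\|_s+\gamma\|M\|_p^q\|\widetilde{G_r}\|_s$ where $\gamma=C_q$.

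That $\mu_r$ is itself a $(p,q)$-Carleson measure is then immediate: the right-hand side is finite since $\widetilde{G_r}\le\widetilde{G}\in L^s(\T)$ (monotonicity of the cone integral in the measure, as $\mu_r\le\mu$) and $F\in L^s(\T)$, so the embedding $H^p\hookrightarrow L^q(\mu_r)$ is bounded; alternatively quote Theorem~\ref{p>q Carleson measure} directly for $\mu_r$, whose boundary part is the same $F\,\ud m$ and whose cone function is $\widetilde{G_r}\le\widetilde G\in L^s$.

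**The decay.** For the final claim $\|\widetilde{G_r}\|_s\to 0$ as $r\to 1^-$: observe that $\widetilde{G_r}(\zeta)=\int_{\Gamma(\zeta)\setminus r\D}\ud\mu(z)/(1-|z|^2)$ decreases pointwise to $\int_{\Gamma(\zeta)\cap\T}\ud\mu(z)/(1-|z|^2)$; but $\mu$ is a finite measure and its restriction to $\T$ contributes $\ud\mu_\T=F\,\ud m$, and for $m$-a.e. $\zeta$ the only boundary point in $\overline{\Gamma(\zeta)}$ is $\zeta$ itself (a single point, $m$-null), so the pointwise limit is $0$ for $m$-a.e. $\zeta$. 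Since $0\le\widetilde{G_r}\le\widetilde{G_{r_0}}\in L^s(\T)$ for any fixed $r_0$ once $r\ge r_0$, the dominated convergence theorem gives $\|\widetilde{G_r}\|_s\to 0$.

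**Main obstacle.** The one genuinely delicate point is the interior maximal-function inequality and the bookkeeping around it: making precise the geometric fact that $\{\zeta\in\T: z\in\Gamma(\zeta)\}$ is an arc of length comparable to $1-|z|$ (which is where the factor $1/(1-|z|^2)$ in the definition of $\widetilde{G_r}$ comes from after Fubini), and checking that the aperture chosen in $\Gamma$ for the maximal function matches — up to widening, which only changes constants — the aperture used in Theorem~\ref{p>q Carleson measure}. Everything else is Hölder's inequality and dominated convergence. This is why, as in Lemma~\ref{carleson}, the detailed verification is best left as a routine adaptation of known arguments (here the tent-space duality of Coifman–Meyer–Stein, or the treatment in~\cite{BJ}), with the proof above recording only the structure.
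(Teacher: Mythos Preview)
Your argument is essentially the same as the paper's: split $\mu_r$ into its boundary and interior parts, handle the boundary with H\"older against $F\in L^s$, handle the interior by the Fubini/tent identity $m(\{\zeta:z\in\Gamma(\zeta)\})\approx 1-|z|$ followed by the maximal function bound and H\"older, and deduce the decay of $\|\widetilde{G_r}\|_s$ by dominated convergence. One cosmetic point: in your decay step the remark about ``the only boundary point in $\overline{\Gamma(\zeta)}$ is $\zeta$ itself (a single point, $m$-null)'' is off-target---the correct and simpler reason the pointwise limit vanishes is that $\Gamma(\zeta)\subset\D$, so $\bigcap_{r<1}\big(\Gamma(\zeta)\setminus r\D\big)=\emptyset$, and one may pass to the limit because $\widetilde{G_{r_0}}\le G\in L^s$ forces $\widetilde{G_{r_0}}(\zeta)<\infty$ for $m$-a.e.\ $\zeta$; this is exactly how the paper argues it.
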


We use the notation~$\widetilde{G_r}$ to avoid any confusion with the notation introduced before for~$\varphi$ and its radial function~$\varphi_r.$\\

\begin{proof}Being a~$(p,q)$-Carleson measure only depends on the ratio~$p/q$ (see \mbox{\cite[Lemma~$2.1$]{BJ}}), so we have to show that~$\mu_r$ is a~$(p/q,1)$-Carleson measure.\\
From the definition it is clear that $\widetilde{G_r}\leq G\in L^s(\T).$ Moreover~$\ud\mu_{r_{|_\T}}=\ud\mu_\T=F\ud m\in L^s(\T).$ Corollary~\ref{contcarleson} ensures the fact that~$\mu_r$ is a~$(p,q)$-Carleson measure.\\
Let $f$ be in $H^p.$ Then
\begin{align}
\int_\T\left|f(\zeta)\right|^q\ \ud\mu_r(\zeta) & =\int_\T\left|f(\zeta)\right|^q\ \ud\mu(\zeta)=\int_\T\left|f(\zeta)\right|^qF(\zeta)\ \ud m(\zeta)\nonumber\\
 & \leq\left(\int_\T\left|f(\zeta)\right|^p\ \ud m(\zeta)\right)^\frac{q}{p}\|F\|_s\nonumber\\
 & \leq\|f\|_p^q\|F\|_s\label{eq4}
\end{align}
using H\"older's inequality with conjugate exponents~$p/q$ and~$s.$\\ For $z\neq0,\ z\in\D,$ let~$\tilde{I}(z)=\{\zeta\in\T\mid\ z\in\Gamma(\zeta)\}.$ In other words~$\zeta\in\tilde{I}(z)\Leftrightarrow z\in\Gamma(\zeta).$ Then
\begin{equation}\label{eq8}
m\left(\tilde{I}(z)\right)\approx1-|z|
\end{equation}
 and 
\begin{align*}
\int_\D\left|f(z)\right|^q\ \ud\mu_r(z) &\approx \int_\D\left|f(z)\right|^q\left(\int_{\tilde{I}
(z)}\ud m(\zeta)\right)\frac{\ud\mu_r(z)}{1-|z|^2}\\
  &= \int_\T\int_{\Gamma(\zeta)}\left|f(z)\right|^q\frac{\ud\mu_r(z)}{1-|z|^2}
  \ \ud m(\zeta)\\
  &\leq \int_\T Mf(\zeta)^q\int_{\Gamma(\zeta)}\frac{\ud\mu_r(z)}{1-|z|^2}
  \ \ud m(\zeta)
\end{align*}
where $Mf(\zeta)=\sup_{z\in\Gamma(\zeta)}|f(z)|$ is the Hardy-Littlewood maximal nontangential function. We apply H\"older's inequality to obtain 
\begin{equation}\label{eq3}
\int_\D\left|f(z)\right|^q\ \ud\mu_r(z) \leq \gamma\|Mf\|_p^q\|\widetilde{G_r}\|_s\leq \gamma\|M\|_p^q\|\widetilde{G_r}\|_s\|f\|_p^q,
\end{equation} 
where $\gamma$ is a positive constant that appears in~\eqref{eq8}. Combining \eqref{eq4} and \eqref{eq3} it follows that \[\int_{\overline{\D}}\left|f(z)\right|^q\ \ud\mu_r(z)\leq (\|F\|_s+\gamma\|M\|_p^q\|\widetilde{G_r}\|_s)\|f\|_p^q.\]

 It remains to show that~$\|\widetilde{G_r}\|_s\to 0$ when~$r\to 1.$ We will make use of Lebesgue's dominated convergence theorem. Clearly we have~$0\leq \widetilde{G_r}\leq G\in L^s(\T),$ so we need to show that~$\widetilde{G_r}(\zeta)\rightarrow0$ as~$r\rightarrow1$ for~$m-$almost every~$\zeta\in\T.$ Let~\mbox{$A=\{\zeta\in\T\mid\ G(\zeta)<\infty\}$.} It is a set of full measure \mbox{($ m(A)=1$)} since $G\in L^s(\T).$ Write~$\widetilde{G_r}(\zeta)=\int_{\Gamma(\zeta)}\tilde{f_r}(z)\ \ud\mu(z)$ with~$\tilde{f_r}(z)=\ind_{\overline{\D}\backslash r\D}(z)(1-|z|^2)^{-1},\ z\in\Gamma(\zeta).$ For every $\zeta\in A$ one has 
\begin{align*}
&\left|\tilde{f_r}(z)\right|\leq\frac{1}{1-|z|^2}\in L^1\left(\Gamma(\zeta),\mu\right)\textrm{ since }\zeta\in A,\\
&\tilde{f_r}(z)             \mathop{\longrightarrow}_{r\to 1}0\textrm{ for all }z\in\Gamma(\zeta)\subset\D.
\end{align*}
Lebesgue's dominated convergence theorem in~$L^1\left(\Gamma(\zeta),\mu\right)$ ensures that $\widetilde{G_r}(\zeta)=\|\tilde{f_r}\|_{L^1\left(\Gamma(\zeta),\mu\right)}$ tends to zero as~$r$ tends to~$1$ for~$ m-$almost every~$\zeta\in\T,$ which ends the proof.
\end{proof}

\begin{thm}
Let $u$ be an analytic function on~$\D$ and $\varphi$ an analytic self-map of~$\D.$ Assume that $uC_\varphi$ is a bounded operator from~$H^p$ to~$H^q,$ with~$\infty>p>q\geq1.$ Then \[\|uC_\varphi\|_e\leq2\|C_\varphi\|_{p/q}^{1/q}\left(\int_{E_\varphi}|u(\zeta)|^\frac{pq}{p-q}\ \ud m(\zeta)\right)^\frac{p-q}{pq},\]where $\|C_\varphi\|_{p/q}$ denotes the norm of $C_\varphi$ acting on $H^{p/q}.$
\end{thm}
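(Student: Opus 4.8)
The plan is to approximate $uC_\varphi$ by the operators $uC_{r\varphi}$, $0<r<1$. Since $r\varphi(\D)\subset r\overline{\D}$ and bounded subsets of $H^p$ are normal families, the composition operator $C_{r\varphi}$ is compact from $H^p$ into $H^\infty$; as $u=uC_\varphi(1)\in H^q$, multiplication by $u$ is bounded from $H^\infty$ into $H^q$, so $uC_{r\varphi}=M_u\circ C_{r\varphi}$ is a compact operator from $H^p$ to $H^q$. Therefore $\|uC_\varphi\|_e\leq\liminf_{r\to1^-}\|uC_\varphi-uC_{r\varphi}\|$, and it suffices to estimate, for $f$ in the unit ball of $H^p$,
\[\|(uC_\varphi-uC_{r\varphi})f\|_q^q=\int_\T|u(\zeta)|^q\,\big|f(\varphi(\zeta))-f(r\varphi(\zeta))\big|^q\ \ud m(\zeta).\]

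Fix $\eps>0$ and split $\T$ into $F_\eps=\{\zeta\in\T\mid\ |\varphi^*(\zeta)|<1-\eps\}$ and its complement, up to the null set on which $\varphi^*$ does not exist. On $F_\eps$ the points $\varphi(\zeta)$ and $r\varphi(\zeta)$ both lie in $(1-\eps)\overline{\D}$, and using Cauchy's estimates together with $|f(z)|\leq\|f\|_p(1-|z|^2)^{-1/p}$ one sees that the unit ball of $H^p$ is uniformly Lipschitz on $(1-\eps)\overline{\D}$; hence $|f(\varphi(\zeta))-f(r\varphi(\zeta))|\leq C_\eps(1-r)$ with $C_\eps$ independent of $f$, and the part of the integral over $F_\eps$ is at most $(C_\eps(1-r))^q\|u\|_q^q$, which tends to $0$ as $r\to1^-$ for $\eps$ fixed. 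On $\T\setminus F_\eps$ I would use $|a-b|^q\leq2^{q-1}(|a|^q+|b|^q)$, which reduces the task to bounding $\int_{\T\setminus F_\eps}|u|^q\,|g\circ\varphi|^q\ \ud m$ for $g\in\{f,f_r\}$ (note $\|f_r\|_p\leq\|f\|_p$); Hölder's inequality with the conjugate exponents $s=p/(p-q)$ and $p/q$ gives
\[\int_{\T\setminus F_\eps}|u|^q\,|g\circ\varphi|^q\ \ud m\leq\bigg(\int_{\T\setminus F_\eps}|u|^{qs}\ \ud m\bigg)^{1/s}\,\|g\circ\varphi\|_p^q.\]

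The key point is then the comparison $\|C_\varphi g\|_p\leq\|C_\varphi\|_{p/q}^{1/q}\|g\|_p$ for $g\in H^p$. To prove it, let $G$ be the outer function with $|G^*|=|g^*|^q$ on $\T$; then $G\in H^{p/q}$ with $\|G\|_{p/q}^{p/q}=\|g\|_p^p$, and since $\log|g|$ is dominated on $\D$ by the Poisson integral of $\log|g^*|$ one gets $|g(z)|^q\leq|G(z)|$ for all $z\in\D$. Hence $\|g\circ\varphi\|_p^p=\int_\T|g\circ\varphi|^p\ \ud m\leq\int_\T|G\circ\varphi|^{p/q}\ \ud m=\|C_\varphi G\|_{p/q}^{p/q}\leq\|C_\varphi\|_{p/q}^{p/q}\|g\|_p^p$. (Incidentally this recovers $\|C_\varphi\|_{H^p}\leq\|C_\varphi\|_{H^{p/q}}^{1/q}$, an equality by Littlewood's sharp norm formula.) Feeding this back, for every $f$ in the unit ball of $H^p$ one obtains
\[\|(uC_\varphi-uC_{r\varphi})f\|_q^q\leq(C_\eps(1-r))^q\|u\|_q^q+2^q\|C_\varphi\|_{p/q}\bigg(\int_{\T\setminus F_\eps}|u|^{qs}\ \ud m\bigg)^{1/s}.\]

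Finally I let $r\to1^-$ and then $\eps\to0$. One may assume the right-hand side of the theorem is finite (otherwise nothing is to prove), so $\int_{\T\setminus F_{\eps_0}}|u|^{qs}\ \ud m<\infty$ for some $\eps_0>0$; since $\bigcap_{\eps>0}(\T\setminus F_\eps)$ equals $E_\varphi$ up to a null set, dominated convergence gives $\int_{\T\setminus F_\eps}|u|^{qs}\ \ud m\to\int_{E_\varphi}|u|^{qs}\ \ud m$. Thus $\|uC_\varphi\|_e^q\leq2^q\|C_\varphi\|_{p/q}\big(\int_{E_\varphi}|u|^{qs}\ \ud m\big)^{1/s}$, and taking $q$-th roots (recalling $qs=pq/(p-q)$) yields the claimed estimate. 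The only step that is not routine is the comparison just described: replacing $f\in H^p$ by the auxiliary outer function in $H^{p/q}$ whose boundary modulus is $|f^*|^q$ is precisely what produces the norm of $C_\varphi$ on $H^{p/q}$ rather than on $H^p$, and it lets the whole argument go through uniformly down to $q=1$, where the Riesz projection is unavailable.
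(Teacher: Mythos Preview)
Your approach differs genuinely from the paper's, and up to the last paragraph it is both correct and more elementary. The paper approximates by the Fej\'er means $uC_\varphi K_N$, passes to the pullback measure $\mu_\varphi$ on $\overline{\D}$, and combines a Carleson--measure estimate (Lemma~\ref{Carleson2}) with the duality $(H^{p/q})^*\cong L^s/H^s_0$ to produce the factor $\|C_\varphi\|_{p/q}$. You instead approximate by $uC_{r\varphi}$ (the device of Section~4) and replace the duality step by the outer--function comparison $|g|^q\le|G|$, which is a clean and direct way to obtain $\|C_\varphi g\|_p\le\|C_\varphi\|_{p/q}^{1/q}\|g\|_p$.

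The gap is in your final limit. From the finiteness of $\int_{E_\varphi}|u|^{qs}\ \ud m$ you write ``so $\int_{\T\setminus F_{\eps_0}}|u|^{qs}\ \ud m<\infty$ for some $\eps_0>0$,'' but this is a non sequitur: $E_\varphi$ is strictly contained in every $\T\setminus F_\eps=\{|\varphi^*|\ge1-\eps\}$, and nothing in your hypotheses forces $|u|^{qs}$ to be integrable on the strip $\{1-\eps\le|\varphi^*|<1\}$ (you only know $|u|^q\in L^1(\T)$, and $qs>q$). If those integrals are all infinite, your inequality at each fixed $\eps$ reads $\|uC_\varphi\|_e^q\le+\infty$ and the passage $\eps\to0$ yields nothing. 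The paper sidesteps this by splitting $\mu_\varphi$ on $\overline{\D}$ rather than splitting $\T$: the part of $\mu_\varphi$ living on $\T$ sees \emph{only} $E_\varphi$, so one lands on $\int_{E_\varphi}|u|^{qs}\ \ud m$ directly, while the annulus $\{r\le|z|<1\}\subset\D$ is handled by Lemma~\ref{Carleson2} and contributes a term $\|\widetilde{G_r}\|_s\to0$. To repair your argument you would have to isolate the set $\{1-\eps\le|\varphi^*|<1\}$ and show its contribution vanishes as $\eps\to0$ uniformly over $B_{H^p}$; that is precisely the place where a Carleson--type estimate, or something equivalent, seems unavoidable.
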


\begin{proof}We follow the same lines as in the proof of the upper estimate in Proposition~\ref{majoration1}: we have the decomposition~$I=K_N+R_N$ in~$B(H^p)$, where~$K_N$ is the convolution operator by the Fej\'er kernel, and \[\|uC_\varphi\|_e\leq\liminf_N\|uC_\varphi R_N\|.\]
We also have, for every $0<r<1,$

\begin{align*}
\|(uC_\varphi R_N)f\|_q^q &=\int_{\overline{\D}\backslash r\D}\vert
            R_Nf(w)\vert^q\  \ud\mu_\varphi(w)+\int_{r\D}\vert R_Nf(w)\vert^q\  \ud\mu_\varphi(w) \\
            &=I_1(N,r,f)+I_2(N,r,f).
\end{align*}
As in the $p\leq q$ case, we show that \[\lim_N\sup_{\|f\|_p\leq1}I_2(N,r,f)=0.\]
The measure $\mu_\varphi$ being a~$(p,q)$-Carleson measure, we use Lemma~\ref{Carleson2} to have the following inequality \[I_1(N,r,f)\leq (\|F\|_s+\gamma\|M\|_p^q\|\widetilde{G_r}\|_s)\|R_Nf\|_p^q\]for every $f\in H^p.$ As a consequence\[\|uC_\varphi\|_e\leq\liminf_N\left(\sup_{\|f\|_p\leq1}I_1(N,r,f)\right)^\frac{1}{q}\leq 2(\|F\|_s+\gamma\|M\|_p^q\|\widetilde{G_r}\|_s)^\frac{1}{q}\]
using the fact that~$\sup_N\|R_N\|\leq2.$ Now we make~$r$ tend to~$1$, keeping in mind that~$\|\widetilde{G_r}\|_s\to0.$ We obtain \[\|uC_\varphi\|_e\leq 2\|F\|_s^{1/q}.\]
It remains to see that we can choose~$F$ in such a way that
 \[\|F\|_s\leq\|C_\varphi\|_{p/q}\left(\int_{E_\varphi}|u(\zeta)|^\frac{pq}{p-q}\  \ud m(\zeta)\right)^{1/s}.\]
Indeed, if $f\in C(\T)\cap H^{p/q}$, we apply H\"older's inequality with conjugates exponents~$p/q$ and~$s$ to have 
\begin{align*}
\left|\int_\T f \  \ud\mu_{\varphi,\T}\right| & =\left|\int_{E_\varphi}|u|^q f\circ\varphi\  \ud m\right|\\
 & \leq\int_{E_\varphi}|u|^q|f\circ\varphi|\  \ud m\\
 & \leq\|C_\varphi(f)\|_{p/q}\left(\int_{E_\varphi}|u|^{sq}\  \ud m\right)^{1/s},
\end{align*} 
meaning that~$\mu_{\varphi,\T}\in\left(H^{p/q}\right)^*$, which is isometrically isomorphic to~$L^s(\T)/H^s_0$, where~$H^s_0$ is the subspace of~$H^s$ consisting of functions vanishing at zero. If we denote by~$N(\mu_{\varphi,\T})$ the norm of~$\mu_{\varphi,\T}$ viewed as an element of~$\left(H^{p/q}\right)^*$, then one can choose~$F\in L^s(\T)$ satisfying\[\|F\|_s=N(\mu_{\varphi,\T})\leq\|C_\varphi\|_{p/q}\left(\int_{E_\varphi}|u|^{pq/(p-q)}\  \ud m\right)^{1/s}\] and~$\mu_{\varphi,\T}=F\  \ud m$ (see for instance~\cite{K}, p. $194$). Finally we have
 \[\|uC_\varphi\|_e\leq2\|C_\varphi\|_{p/q}^{1/q}\left(\int_{E_\varphi}|u(\zeta)|^\frac{pq}{p-q}\ \ud m(\zeta)\right)^\frac{p-q}{pq}.\]
\end{proof}

Although we have not be able to give a corresponding lower bound of this form for the essential norm of~$uC_\varphi$, we have the following result:

\begin{prop}
Let $1\leq q<p<\infty$, and assume that $uC_\varphi\in B(H^p,H^q).$ Then 
\[\|uC_\varphi\|_e\geq\left(\int_{E_\varphi}|u(\zeta)|^q\ \ud m(\zeta)\right)^\frac{1}{q}.\]
\end{prop}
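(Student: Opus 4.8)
The plan is to follow the scheme of the lower estimates in Sections~3 and~4, testing $uC_\varphi+K$ against a suitable bounded sequence, the point being that since $p>1$ the monomials $(z^n)_{n\ge1}$ — which are unit vectors of $H^p$ — form a \emph{weakly null} sequence in $H^p$; this is a standard consequence of the Riemann--Lebesgue lemma, since every bounded functional on $H^p$ extends to $L^p(\T)$ and is therefore represented by some $h\in L^{p/(p-1)}(\T)\subset L^1(\T)$, so that $\int_\T\zeta^n h(\zeta)\,\ud m(\zeta)\to0$. As any compact operator is completely continuous, $\|Kz^n\|_q\to0$ for every compact $K\in B(H^p,H^q)$, whence
\[\|uC_\varphi+K\|\ \ge\ \limsup_n\big(\|uC_\varphi(z^n)\|_q-\|Kz^n\|_q\big)\ =\ \limsup_n\|uC_\varphi(z^n)\|_q.\]

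Next I would evaluate $\limsup_n\|uC_\varphi(z^n)\|_q$. One has $uC_\varphi(z^n)=u\varphi^n\in H^q$ (as $uC_\varphi\in B(H^p,H^q)$), so its $H^q$ norm is the $L^q(\T)$ norm of its boundary function; since $u=uC_\varphi(1)\in H^q$ and $\varphi\in H^\infty$ both have radial limits $m$-a.e., one gets $(u\varphi^n)^*=u^*(\varphi^*)^n$ a.e., and hence
\[\|uC_\varphi(z^n)\|_q^q=\int_\T|u^*(\zeta)|^q\,|\varphi^*(\zeta)|^{nq}\,\ud m(\zeta).\]
On $E_\varphi$ the integrand equals $|u^*|^q$, while at $m$-a.e.\ $\zeta\notin E_\varphi$ we have $|\varphi^*(\zeta)|<1$, so $|\varphi^*(\zeta)|^{nq}\to0$; dominated convergence (with dominating function $|u^*|^q\in L^1(\T)$) then yields $\|uC_\varphi(z^n)\|_q^q\to\int_{E_\varphi}|u(\zeta)|^q\,\ud m(\zeta)$. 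It follows that $\|uC_\varphi+K\|\ge\big(\int_{E_\varphi}|u|^q\,\ud m\big)^{1/q}$ for every compact $K$, and taking the infimum over $K$ produces the desired lower bound.

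I do not anticipate a genuine obstacle here: the argument is actually lighter than its analogues for the cases $H^\infty\to H^q$ and $H^1\to H^q$, since the weak nullity of $(z^n)$ in $H^p$ — available precisely because $p>1$ — spares us the passage to subsequences and the formation of differences used there. The only steps deserving a word of justification are the identification of the boundary values of $u\varphi^n$ and the verification of the hypotheses of dominated convergence, both routine.
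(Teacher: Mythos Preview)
Your proof is correct and follows exactly the same approach as the paper: test against the weakly null sequence $(z^n)$ in $H^p$, use complete continuity of compact operators to kill $\|Kz^n\|_q$, and identify $\lim_n\|u\varphi^n\|_q^q$ with $\int_{E_\varphi}|u|^q\,\ud m$ by dominated convergence. You supply more detail than the paper (the Riemann--Lebesgue justification for weak nullity and the explicit dominated-convergence step), but the argument is the same.
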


\begin{proof}Take a compact operator $K$ from~$H^p$ to~$H^q$. Since it is completely continuous, and the sequence $(z^n)$ converges weakly to zero in~$H^p,$ $\left(K(z^n)\right)_n$ converges to zero in~$H^q.$ Hence\[\|uC_\varphi+K\|\geq\|(uC_\varphi+K)z^n\|_q\geq\|uC_\varphi(z^n)\|_q-\|K(z^n)\|_q\]for every~$n\geq0.$ Taking the limit as~$n$ tends to infinity, we have\[\|uC_\varphi\|_e\geq\left(\int_{E_\varphi}|u(\zeta)|^q\ \ud m(\zeta)\right)^\frac{1}{q}.\]
\end{proof}

\subsection*{Acknowledgements}
The author is grateful to the referee for his careful reading and for the several suggestions made for improvement.

\end{document}